\newcolumntype{L}[1]{>{\raggedright\let\newline\\\arraybackslash\hspace{0pt}}m{#1}}
\newcolumntype{C}[1]{>{\centering\let\newline\\\arraybackslash\hspace{0pt}}m{#1}}
\newcolumntype{R}[1]{>{\raggedleft\let\newline\\\arraybackslash\hspace{0pt}}m{#1}}
\newtheoremstyle{theoremstyle}
{10pt}      
{5pt}       
{\itshape}  
{}          
{\bfseries} 
{}         
{ }      
{}          
\newtheoremstyle{algorithmstyle}
{10pt}      
{5pt}       
{}  
{}          
{\bfseries} 
{}         
{ }      
{}          
\newtheoremstyle{examplestyle}
{10pt}      
{5pt}       
{}          
{}          
{\bfseries} 
{}         
{ }      
{}          
\theoremstyle{theoremstyle}
\newtheorem{theorem}{Theorem}[section]
\newtheorem*{theorem*}{Theorem}
\newtheorem{lemma}[theorem]{Lemma}
\newtheorem{proposition}[theorem]{Proposition}
\newtheorem*{proposition*}{Proposition}
\newtheorem*{corollary*}{Corollary}
\newtheorem{conjecture}[theorem]{Conjecture}
\newtheorem*{conjecture*}{Conjecture}
\theoremstyle{examplestyle}
\newtheorem{example}[theorem]{Example}
\newtheorem{definition}[theorem]{Definition}
\newtheorem{definition-lemma}[theorem]{Definition-Lemma}
\newtheorem{definition*}{Definition}
\newtheorem{remark}[theorem]{Remark}
\newtheorem{remark*}{Remark}
\newtheorem{convention}[theorem]{Convention}
\theoremstyle{algorithmstyle}
\newtheorem{algorithm}[theorem]{Algorithm}
\newcommand{\QQ}{\mathbb{Q}}
\newcommand{\ZZ}{\mathbb{Z}}
\newcommand{\FF}{\mathbb{F}}
\newcommand{\PP}{\mathbb{P}}
\newcommand{\JC}{\mathrm{J}(C)}
\newcommand*{\myovline}[2]{\overbracket[#2][-1pt]{#1}}
\DeclareMathOperator{\Span}{Span}
\DeclareMathOperator{\Sym}{Sym}
\DeclareMathOperator{\Pic}{Pic}
\DeclareMathOperator{\Res}{Res}
\DeclareMathOperator{\Aut}{Aut}
\DeclareMathOperator{\res}{res}
\begin{document}

\title{Tritangents and their space sextics}

\author{Turku Ozlum Celik}
\address{Max Planck Institute MIS Leipzig}
\email{tuerkue.celik@mis.mpg.de}

\author{Avinash Kulkarni}
\address{Simon Fraser University}
\email{avi\_kulkarni@sfu.ca}

\author{Yue Ren}
\address{Max Planck Institute MIS Leipzig}
\email{yue.ren@mis.mpg.de}

\author{Mahsa Sayyary Namin}
\address{Max Planck Institute MIS Leipzig}
\email{mahsa.sayyary@mis.mpg.de}

\subjclass[2010]{14Q05 (primary), 14H50 (secondary)}
\keywords{algebraic curves, del Pezzo surfaces, theta characteristics, space sextic, tritangents}

\date{\today}

\maketitle
\begin{abstract}
  Two classical results in algebraic geometry are that the branch curve of a del Pezzo surface of degree 1 can be embedded as a space sextic curve in~$\mathbb P^3$ and that every space sextic curve has exactly $120$ tritangents corresponding to its odd theta characteristics. In this paper we revisit both results from the computational perspective. Specifically, we give an algorithm to construct space sextic curves that arise from blowing up $\PP^2$ at eight points and provide algorithms to compute the $120$ tritangents and their Steiner system of any space sextic.
  Furthermore, we develop efficient inverses to the aforementioned methods. We present an algorithm to either reconstruct the original eight points in $\PP^2$ from a space sextic or certify that this is not possible. Moreover, we extend a construction of Lehavi \cite{Lehavi15} which recovers a space sextic from its tritangents and Steiner system. All algorithms in this paper have been implemented in \textsc{magma}.
\end{abstract}

\section{Introduction}

Space sextic curves offer a rich example for understanding the various geometric features of non-planar algebraic curves. A space sextic curve $C$, abbreviated to space sextic, is a smooth algebraic curve in $\PP^3$ which is the intersection of a quadric and a cubic surface. Any space sextic is a non-hyperelliptic genus $4$ curve and conversely, the canonical model of any non-hyperelliptic genus $4$ curve is a space sextic. Several questions for plane curves have natural analogues for curves in arbitrary projective spaces. An important direction of inquiry is the study of hyperplanes that have a special intersection with the curve. In this article we focus on the tritangents of~$C$, planes in $\PP^3$ which are tangent to $C$ at every point of intersection with $C$. They reflect important intrinsic facts about the curve as well as important details about the extrinsic geometry.

For instance, the tritangents arise from effective representatives of the theta characteristics. As such, they provide insight into the Jacobian variety and over the complex numbers yield important information regarding values of theta functions. Conversely, any principally polarized abelian variety $A$ of dimension $4$ defines $120$ planes in $\PP^3$ via its theta functions. If $A$ is the Jacobian of a non-hyperelliptic genus $4$ curve, these $120$ planes are the tritangents of its canonical model \cite{CKS17}. On the extrinsic side of geometry, Caporaso and Sernesi~\cite{CS03} have proven that over the complex numbers the theta characteristics determine the curve uniquely, and Lehavi~\cite{Lehavi15} explained how over the complex numbers the tritangents can be used to recover the curve for generic space sextics on smooth quadrics. Additionally, Ranestad and Sturmfels \cite{RS12} have shown that the tritangents play an important role in the convex hull of compact space curves, whereas Kummer \cite{Kummer18} has used them to prove lower bounds on the number of faces of the convex hull.

In this article we present several algorithms related to space sextic curves and their tritangents.
In Section \ref{sec:genericSpaceSextic} we show how to compute the tritangents of space sextics on smooth quadrics and their Steiner systems; see Algorithms~\ref{alg:tritangentsGeneric} and~\ref{alg:steinerSystem}. In Section \ref{sec: space sextics from dp surface} we do the same for curves which arise from a construction involving del Pezzo surfaces of degree $1$. In Section \ref{sec:IdSSfromDP}, we show that these curves are exactly the ones lying on singular quadrics and answer \cite[Question 5]{KRSS17} by presenting the inverse of that construction, see Algorithm~\ref{algo: reconstruct eight points}.

Finally, in Section \ref{sec: Lehavi} we state a minor correction of \cite[Theorem 2]{Lehavi15} and extend \cite[Theorem 1 and 2]{Lehavi15} to space sextics on singular quadrics and over more general fields. Using them, we explain how to reconstruct space sextics from their tritangents, see Algorithms~\ref{algo: Lehavi1} and~\ref{algo: Lehavi2}, for generic curves on smooth quadrics and generic curves on singular quadrics. Moreover, we allow fields with positive characteristics, provided that it is sufficiently high. All algorithms have been implemented in \textsc{magma} \cite{magma} and are available on \url{https://software.mis.mpg.de}.

One of the greatest challenges in working with tritangents of space sextics symbolically, especially over fields of characteristic $0$, is that the field over which all of their equations are defined is monstrous. To be precise, if $C$ is a generic curve defined over $\QQ$, then its $120$ tritangents are defined over an algebraic extension of $\QQ$ of degree $\#\operatorname{Sp}(8, \FF_2) = 47377612800$. This is critical for the reconstruction algorithms, as their correctness hinge on the existence of a single successful example.
This is where space sextics from del Pezzo surfaces of degree $1$ come in. For them we have a reliable method of constructing space sextics with $120$ rational tritangents.


\section{Definitions and notations}

We briefly run through some basic notions that will be of immediate relevance to us. In particular, we recall some basic facts on space sextics and their tritangents as well as Steiner complexes thereon. Our notation will closely follow that of \cite{Lehavi15}.

\begin{convention}
	For the remainder of the article, fix a base field $k$ of characteristic not equal to $2$. In Sections~\ref{sec:genericSpaceSextic} and~\ref{sec: Lehavi} we assume for convenience that $k$ is algebraically closed.

	Moreover, let $\mathbb P^3$ denote the projective $3$-space over $k$ and let $C\subseteq \PP^3$ be a smooth curve of degree $6$, or equivalently a canonical model of a non-hyperelliptic curve of genus $4$. We can write the curve as an intersection of a unique quadric surface $Q_C$ and a suitable cubic surface, neither of which are necessarily smooth.
\end{convention}

\subsection{General notions}
We denote the canonical divisor class of the smooth curve $C$ by $\kappa_C$. A \emph{theta characteristic} of $C$ is a divisor class $\theta$ such that $2\theta = \kappa_C$. A theta characteristic is \emph{odd} if $\dim H^0(C,\theta)$ is odd and is \emph{even} otherwise. An even theta characteristic is said to be \emph{vanishing} if $\dim H^0(C,\theta) > 0$.

For a smooth proper surface $X$, we use the terminology of \emph{intersection pairing}, \emph{exceptional curve}, and \emph{blow-up/blow-down} from \cite[Chapter V]{Hartshorne77}. A \emph{del Pezzo surface X of degree~1} is a smooth sextic surface in $\PP(1\!:\!1\!:\!2\!:\!3)$ with weighted coordinates $(s\!:\!t\!:\!w\!:\!r)$ of the form
\[
X \colon \lambda r^2 = f_0 w^3 + f_2(s,t) w^2 + f_4(s,t)w + f_6(s,t),
\]
where each $f_d$ is homogeneous polynomial of degree $d$ and $\lambda, f_0 \in k$ are nonzero. Without loss of generality, one may always assume that $\lambda=f_0=1$.

A \emph{Cayley cubic} is an irreducible cubic surface in $\PP^3$ whose singular locus consists of $4$ simple nodes in general position. If $k$ is algebraically closed there is exactly one Cayley cubic up to linear transformations of $\PP^3$.

\subsection{Space sextics, tritangents and Steiner systems}\label{sec: background}

Since $C$ is a smooth space sextic, it has exactly $120$ odd theta characteristics and there is no theta characteristic $\theta$ such that $\dim H^0(C,\theta) > 2$ (as $\dim H^0(C,2\theta) = 4$). In particular, each odd theta characteristic $\theta$ has a unique effective representative which we will refer to as $D_\theta$. Since $C$ is its own canonical model, by definition we have that $2D_\theta$ is cut out by linear form which we will denote $\l_\theta$.

\begin{definition}\label{def: tritangent}
	Let $\theta$ be an odd theta characteristic of $C$. We call the zero locus $H_\theta:=\mathcal Z(l_\theta) \subseteq \PP^3$ the \emph{tritangent plane} associated to $\theta$. Any plane $H\subseteq\PP^3$ of this form is called a \emph{tritangent plane} of $C$ or \emph{tritangent} for short.
\end{definition}

Note that we exclude from consideration planes which are tangent to $C$ at $3$ points (counted with multiplicity) but do not arise from odd theta characteristics. If the unique quadric $Q_C$ containing $C$ is smooth then every plane tangent to $C$ at $3$ points is a tritangent plane in the sense of Definition \ref{def: tritangent}. However, we deliberately consider curves in Section \ref{sec: space sextics from dp surface} where this is not the case.

\begin{definition}\label{def:SteinerSystem}
	Let $\JC$ be the Jacobian variety of $C$ and let $\JC[2]$ be its $2$-torsion points. For any $\alpha\in \JC[2]\setminus\{0\}$, we define the \emph{Steiner complex} associated to $\alpha$ by
	\[
	\Sigma_{C,\alpha} := \Big\{ \{\theta,\theta+\alpha\} : 2\theta = \kappa_C \text{ and } \dim H^0(C,\theta) = \dim H^0(C,\theta+\alpha)\equiv 1 \text{ mod } 2 \Big\}
	\]
	or equivalently, for the corresponding linear forms
	\[
	\mathcal{S}_{C,\alpha} := \big\{ \{l_{\theta},l_{\theta+\alpha}\} : \{ \theta, \theta + \alpha \} \in \Sigma_{C,\alpha} \big\}.
	\]
	We call the set $\mathcal{S}_C:=\{\mathcal{S}_{C,\alpha} : \alpha\in \JC[2]\setminus \{0\}\}$ the \emph{Steiner system} associated to $C$.
\end{definition}

\begin{definition}
	We call four theta characteristics $\theta_1,\ldots,\theta_4$ \emph{syzygetic} if $\sum_{i=1}^4 D_{\theta_i}$ is cut out by a quadric in $\PP^3$. This means that there exists a quadric $\mathcal{Z}(q) \subseteq\PP^3$ such that $C\cap (\bigcup_{i=1}^4 H_i)=C\cap \mathcal{Z}({q^2})$. We call the linear forms $l_1,\ldots,l_4$ \emph{syzygetic} if they arise from syzygetic odd theta characteristics.
\end{definition}

Note that if $\theta_1, \ldots, \theta_4$ are odd theta characteristics of $C$ such that $Y := C \cap (\bigcup_{i=1}^4 H_i)$ consists of $12$ points of multiplicity $1$, then $\theta_1, \ldots, \theta_4$ are syzygetic if and only if there exists a quadric $Q'$ which is not $Q_C$ passing through the $12$ points of $Y$.

\begin{remark}~\label{rem:SteinerSystem}
	There are exactly $255$ non-trivial $2$-torsion points $\alpha\in \JC[2]\setminus\{0\}$. On the other hand, the $120$ tritangents yield precisely $\binom{120}{2}=7140$ pairs $\{l_{\theta_1},l_{\theta_2}\}$.
	Intruigingly, the $7140$ pairs can be split evenly into $255$ blocks, each block containing exactly $28$ pairs, such that
	\begin{itemize}
		\item the union of any two pairs inside a block is syzygetic,
		\item the union of any two pairs in two distinct blocks is not syzygetic,
		\item the intersection of any two pairs inside a block is empty.
	\end{itemize}
	These blocks are the Steiner complexes $\Sigma_{C,\alpha}$ in Definition~\ref{def:SteinerSystem}. For further details see \cite[Section 5.4.2]{Dol2012}.

	In general combinatorics a Steiner system with parameters $(t,k,n)$ is an $n$-element set together with a set of $k$-element subsets called blocks, such that each $t$-element subset is contained in exactly one block. If we forget the pairings in our Steiner complexes $\mathcal S_{C,\alpha}$ and simply consider them as subsets of cardinality $56$, then our Steiner system $\mathcal S_C$ would be a specific instance of a general Steiner system with parameters $(t,k,n) = (2, 56, 120)$.
\end{remark}


\section{Space sextics on smooth quadrics}\label{sec:genericSpaceSextic}
In this section we give a full description of the algorithm appearing in \cite[Section 4]{KRSS17} for calculating the $120$ tritangents of a space sextic that lies on a smooth quadric. Moreover, we provide an algorithm for calculating its Steiner system.

If $C\subseteq\PP^3$ lies on a smooth quadric, it suffices to compute all planes which are tangent at their points of intersection since there are no vanishing theta characteristics.
In theory, this can be done by intersecting $C$ with a parametrized plane and forcing its projection onto a single generic line to be three double points. In practice, it is often sufficient and generally much more efficient to project onto all coordinate axes instead.

\vfill\pagebreak
\begin{algorithm}[Tritangents of space sextics on smooth quadrics]\label{alg:tritangentsGeneric}\
	\begin{algorithmic}[1]
		\REQUIRE{$(f,g)$, where
			\begin{itemize}[leftmargin=*]
				\item $f,g\in k[\mathbf{x}]:=k[x_0,\ldots,x_3]$ homogeneous of degrees $2,3$,
				\item $\mathcal{Z}(f)$ smooth,
				\item $C:=\mathcal{Z}(f)\cap \mathcal{Z}(g)$ a space sextic.
		\end{itemize}}
		\ENSURE{$\{l_\theta : \theta \text{ odd theta characteristic of } C\}\subseteq k[\mathbf{x}]$.}
		\STATE Denote by $\PP k[t_0,t_1]_6$ the space of homogeneous sextic polynomials in $k[t_0,t_1]$. Let $\mathcal{Z}(J) \subseteq \PP k[t_0,t_1]_6$ be the threefold of perfect squares whose ideal $J\subseteq k[a_0,\ldots,a_6]$ is minimally generated by $45$ quartics; see \cite[Table 1]{LS16}
		\begin{center}
			\begin{tikzpicture}
			\node[anchor=base west] (Kt) {$\PP k[t_0,t_1]_6$};
			\node[anchor=base west,xshift=6cm] (P6) at (Kt.base east) {$\PP^6$};
			\draw[->] (Kt) -- node[above,xshift=2.5mm,font=\scriptsize] {$\cong$} node[below,font=\footnotesize] {$a_0t_0^6+\ldots+a_6t_1^6\longmapsto (a_0\!:\!\ldots\!:\!a_6)$} (P6);
			\node[anchor=base,yshift=-15mm] (squaresLeft) at (Kt.base) {$\{(b_0t_0^3+\ldots+b_3t_1^3)^2: (b_0\!:\!...\!:\!b_3)\in\PP^3\}$};
			\node[anchor=base,yshift=-15mm] (squaresRight) at (P6.base) {$\mathcal{Z}(J)$};
			\draw[->] (squaresLeft) -- node[above,font=\footnotesize] {$\cong$} (squaresRight);
			\draw[draw opacity=0] (squaresLeft.base) -- node[sloped] {$\subseteq$} ++(0,1.75);
			\draw[draw opacity=0] (squaresRight.base) -- node[sloped] {$\subseteq$} ++(0,1.75);
			\end{tikzpicture}
		\end{center}
		\STATE Consider the defining linear form of a parametrized plane in $k[\mathbf{x}]$
		\[ u_0x_0+\ldots+u_3x_3 \in k[\mathbf{u}^{\pm 1}][\mathbf{x}]:=k[u_0^{\pm 1},\ldots,u_3^{\pm 1}][x_0,\ldots,x_3].\vspace{-5mm} \]
		\FOR{$i,j\in\{0,\ldots,3\}$, $i\neq j$}
		\STATE Let $f_i,g_i$ be the images of $f, g$ under the substitution map
		\[k[\mathbf{u}^{\pm 1}][\mathbf{x}] \longrightarrow k[\mathbf{u}^{\pm 1}][x_j,x_k,x_l], \quad x_i \longmapsto \frac{-u_jx_j-u_kx_k-u_lx_l}{u_i}, \]
		where $\{i,j,k,l\}=\{0,\ldots,3\}$.
		\STATE Compute the following resultant which is a homogeneous sextic in $x_k,x_l$
		\[ \textstyle\sum_{\mu=0}^6 c_{\mu} \cdot x_k^{6-\mu} x_l^\mu := \Res(f_i,g_i,x_j)\in k[\mathbf{u}^{\pm 1}][x_k,x_l].\vspace{-5mm} \]
		\STATE Let $I_{ij}$ denote the image of $J$ under the substitution map
		\[ k[a_0,\ldots,a_6]\longrightarrow k[\mathbf{u}^{\pm 1}],\quad a_\mu \longmapsto c_{\mu}. \]
		\ENDFOR
		\STATE \label{step:idealI}Set $I:=\sum_{i,j} I_{ij}$ and compute the $120$ points in $\mathcal{Z}(I)$.
		\RETURN{$\{z_0x_0+\ldots+z_3x_3: (z_0\!:\!\ldots\!:\!z_3)\in \mathcal{Z}(I)\}$.}
	\end{algorithmic}
\end{algorithm}

A simple way to obtain the Steiner system from the set of linear forms is to exploit the fact that they form a Steiner system in the combinatorial sense; see Remark~\ref{rem:SteinerSystem}. Algorithm~\ref{alg:steinerSystem} simply enumerates through all pairs of linear forms, grouping those together which are syzygetic.

\begin{algorithm}[Steiner system via syzygetic relations]\label{alg:steinerSystem}\
	\begin{algorithmic}[1]
		\REQUIRE{$(f,g,{T})$, where
			\begin{itemize}[leftmargin=*]
				\item $f,g\in k[\mathbf{x}]$ homogeneous of degrees $2,3$,
				\item $\mathcal{Z}(f)$ smooth,
				\item $C:=\mathcal{Z}(f)\cap \mathcal{Z}(g)$ a space sextic,
				\item $T=\{l_\theta : \theta \text{ odd theta characteristic of } C\}\subseteq k[\mathbf{x}]$.
		\end{itemize}}
		\ENSURE{$\mathcal{S}_C$, the Steiner system associated to $C$.}
		\STATE Initialize $\mathcal S_C:=\emptyset$.
		\FOR{$\{l_1,l_2\}\subseteq T$ with $l_1\neq l_2$}
		\IF{$\exists S \in\mathcal S_C\;\; \exists \{l_3,l_4\}\in\Sigma: l_1,\ldots,l_4 \text{ syzygetic}$} \label{step:syzygetic}
		\STATE Set $S:= S\cup\{ \{l_1,l_2\} \}$.
		\ELSE
		\STATE Set $S:=\{ \{l_1,l_2\} \}$ and $\mathcal S_C:=\mathcal S_C\cup\{S\}$.
		\ENDIF
		\ENDFOR
		\RETURN{$\mathcal S_C$}.
	\end{algorithmic}
\end{algorithm}

\begin{remark}
	The bottleneck in Algorithm~\ref{alg:steinerSystem} is deciding whether four linear forms $l_1,\ldots,l_4$ are syzygetic in Step~\ref{step:syzygetic}. This can be done in several ways depending on what is viable. One straightforward option would be to compute whether the intersection $C\cap(\bigcup_{i=1}^4 \mathcal Z(l_i))$, as a divisor of $C$, is linearly equivalent to $2\kappa_C$ using the \textsc{magma} intrinsic \texttt{IsLinearlyEquivalent}.
\end{remark}

\begin{remark}
	Note that Algorithm~\ref{alg:steinerSystem} can also be applied ad verbum to space sextics on singular quadrics. Algorithm~\ref{alg:tritangentsGeneric} has a straightforward generalization:

	For a sextic on a singular quadric, the ideal $I$ in Step \ref{step:idealI} will be positive dimensional. It will contain infinitely many tangent planes of the singular quadric that contain the singularity; see Proposition~\ref{prop:oddThetaCharacteristics}. This positive dimensional component can be removed using saturation, leaving us with the desired $120$ tritangents that are of interest to us.
\end{remark}

When applied to space sextic curves that arise from the construction in Section \ref{sec: space sextics from dp surface}, both Algorithm~\ref{alg:tritangentsGeneric} and Algorithm~\ref{alg:steinerSystem} prove to be inferior to their more specialized counterparts in Section~\ref{sec: space sextics from dp surface}, which is why we generally advise against using them for space sextics on singular quadrics.


\section{Space sextics on singular quadrics}\label{sec: space sextics from dp surface}

In this section we review the classical construction of space sextics from del Pezzo surfaces of degree $1$. These curves are special as the unique quadric containing it is singular. Nevertheless, the construction remains a valuable source of space sextics as many interesting qualities of them are both computationally and conceptually much more accessible compared to the generic case. In Proposition~\ref{prop:singularQuadricEvenTheta} we show that any space sextic on a singular quadric arises this way.

Moreover, we describe special algorithms which compute the $120$ tritangents as in \cite[Section 2]{KRSS17} and the Steiner system for this class of space sextics.

\subsection{Space sextics from del Pezzo surfaces of degree 1}\label{sec:ConsTheCurv}
Consider a configuration $\mathcal{P}:=\{P_1, \ldots ,P_8\}\subseteq\mathbb P^2$ of eight points in general position. First, note that the space of plane cubics through $\mathcal{P}$ has dimension $10-8=2$. Let $\{{s,t}\}$ be any basis of it. Next, observe that the space of plane sextics vanishing doubly on $\mathcal{P}$ has dimension $28-8 \cdot 3=4$. It is spanned by $\{s^2,st,t^2,{w}\}$ for some sextic $w$. Further, the space of plane nonics vanishing triply on $\mathcal{P}$ has dimension $55-8 \cdot 6=7$. It is spanned by $\{s^3,s^2t,st^2,t^3,sw,tw,r\}$ for some nonic $r$. This defines a rational map
\[
\begin{array}{rclc}
\psi\colon & \PP^2  &\dashrightarrow & \PP(1\!:\!1\!:\!2\!:\!3) \\
& (x\!:\!y\!:\!z)&\longmapsto & (s(x,y,z)\!:\!t(x,y,z)\!:\!w(x,y,z)\!:\!r(x,y,z)).
\end{array}
\]

\noindent
The closure of the image of $\psi$ is cut out by a single equation of degree six. As it turns out, we can say a lot about the image of $\psi$ \cite[Remark ~24.4.2]{manin1986cubic}.

\begin{proposition}
	The image of $\psi$ is a del Pezzo surface of degree 1.
\end{proposition}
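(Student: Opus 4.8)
The plan is to establish that the image surface $X := \overline{\psi(\PP^2)}$ is a del Pezzo surface of degree $1$ by identifying $\psi$ with the anticanonical map associated to the blow-up of $\PP^2$ at the eight points $\mathcal P$, and then checking that $X$ has the defining properties: it is smooth, and its anticanonical class is ample with self-intersection $1$. The key observation is that the three linear systems appearing in the construction --- cubics through $\mathcal P$, sextics double on $\mathcal P$, and nonics triple on $\mathcal P$ --- are precisely the systems $|-K|$, $|-2K|$, and $|-3K|$ on the blow-up $\tilde X := \mathrm{Bl}_{\mathcal P}\PP^2$, where $K$ is its canonical class.

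\textbf{Step 1: Interpret the data on the blow-up.} Let $\pi\colon \tilde X \to \PP^2$ be the blow-up at $\mathcal P$, with exceptional curves $E_1,\ldots,E_8$, and let $\ell = \pi^*(\text{line})$. Then $-K_{\tilde X} = 3\ell - \sum_{i=1}^8 E_i$, so that $(-K_{\tilde X})^2 = 9 - 8 = 1$. I would first record that global sections of $\mathcal O(-K_{\tilde X})$, $\mathcal O(-2K_{\tilde X})$, $\mathcal O(-3K_{\tilde X})$ pull back to exactly the plane curves described: cubics through the eight points, sextics vanishing doubly, and nonics vanishing triply. The dimension counts given in the excerpt ($2$, $4$, and $7$ sections respectively) match $h^0(-dK_{\tilde X})$ for $d = 1,2,3$; this is where the assumption that $\mathcal P$ is in general position enters, guaranteeing the linear systems have the expected dimension and that $-K_{\tilde X}$ is ample (via the Nakai--Moishezon criterion, the eight points being in general position forces every curve class to meet $-K$ positively).

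\textbf{Step 2: Match $\psi$ with the anticanonical-type embedding.} The weighted projective space $\PP(1\!:\!1\!:\!2\!:\!3)$ has coordinate ring generated in the pattern $(s,t)$ of weight $1$, $w$ of weight $2$, $r$ of weight $3$, which mirrors exactly the graded ring $R = \bigoplus_{d\ge 0} H^0(\tilde X, -dK_{\tilde X})$: two generators in degree $1$, one new generator $w$ in degree $2$, one new generator $r$ in degree $3$. Thus $\psi$ factors through $\tilde X$ and coincides with the map $\tilde X \to \Proj R \hookrightarrow \PP(1\!:\!1\!:\!2\!:\!3)$. Since $-K_{\tilde X}$ is ample, this map is a closed immersion of $\tilde X$ onto $X$, so $X$ is a smooth surface isomorphic to $\tilde X = \mathrm{Bl}_{\mathcal P}\PP^2$.

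\textbf{Step 3: Verify the defining form.} It remains to confirm that $X$ is cut out by a single sextic equation in $\PP(1\!:\!1\!:\!2\!:\!3)$ of the shape $r^2 = w^3 + f_2(s,t)w^2 + f_4(s,t)w + f_6(s,t)$. This follows from analyzing the relations in $R$: in degree $6$ there is exactly one relation among the monomials of weighted degree $6$, namely the one expressing $r^2$ (the only degree-$6$ product involving $r$ twice) in terms of $w^3$ and the lower $w$-power terms. A Hilbert-series computation for $R$ pins down that this relation exists and is unique, yielding the Weierstrass-type sextic, which is precisely the defining equation of a del Pezzo surface of degree $1$ in the convention fixed earlier.

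\textbf{The main obstacle} I anticipate is Step 2, specifically proving that $R$ is generated by $s,t,w,r$ with no further generators needed in higher degree, and that $\psi$ is genuinely an isomorphism onto its image rather than merely birational. The generation statement requires either invoking the general theory of anticanonical rings of del Pezzo surfaces or directly checking surjectivity of the multiplication maps $\Sym^d H^0(-K) \oplus \cdots \to H^0(-dK)$ up to the degree where the sextic relation appears; controlling that the only relation is the expected one is the delicate point, and it is exactly here that one would cite \cite[Remark~24.4.2]{manin1986cubic} to supply the structural result rather than reprove it by hand.
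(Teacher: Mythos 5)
Your proposal is correct in outline, but note that the paper does not actually prove this proposition: it is stated as a classical fact with a pointer to \cite[Remark 24.4.2]{manin1986cubic}, which is exactly the reference you yourself invoke at the end for the structural facts about the anticanonical ring. So where the paper defers everything to Manin, you unpack the standard argument behind the citation: identifying the three linear systems of the construction with $|-dK_{\tilde X}|$ for $d=1,2,3$ on the blow-up, checking $(-K_{\tilde X})^2 = 1$ and the dimension counts $2,4,7$ (which indeed match $h^0(-dK_{\tilde X}) = 1 + d(d+1)/2$), using general position to get ampleness via Nakai--Moishezon, and recovering $X$ as $\Proj$ of the anticanonical ring inside $\PP(1\!:\!1\!:\!2\!:\!3)$ cut out by a single sextic relation. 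This buys transparency --- it shows exactly where general position enters and why the weights $1,1,2,3$ and the Weierstrass shape appear --- at the cost of length, and your own diagnosis of the residual gap (generation of the ring by $s,t,w,r$ and uniqueness of the degree-$6$ relation) is accurate: that is precisely the content of the cited remark, so your argument and the paper's citation rest on the same foundation. One small gloss in your Step 3: a priori the degree-$6$ relation also contains terms linear in $r$, of the form $r\cdot g_3(s,t,w)$; to reach the pure form $r^2 = w^3 + f_2 w^2 + f_4 w + f_6$ one completes the square in $r$, which uses $\operatorname{char} k \neq 2$ (the paper's standing convention), and the paper accordingly allows a change of coordinates before writing this normal form.
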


Henceforth, we let $X := \myovline{\mathrm{Im}({\psi})}{0.6pt}$. By abuse notation we label the coordinates of $\PP(1\!:\!1\!:\!2\!:\!3)$ by $s,t,w,r$ (respectively, with respect to weights).
After a linear change of coordinates we can write the defining equation of $X$ in $\PP(1\!:\!1\!:\!2\!:\!3)$ as
\begin{equation*}\label{DelPezzoEq}
	X\colon r^2 = w^3+f_2(s,t) \cdot w^2+f_4(s,t) \cdot w + f_6(s,t),
\end{equation*}
such that each $f_2,f_4,f_6$ is homogeneous degree $2,4,6$ respectively. Note that we obtain an elliptic curve on $X$ by specializing $t=0$, which will be important in Section~\ref{sec: reconstruction algorithm}.

Now consider the projection $\pi\colon \PP(1\!:\!1\!:\!2\!:\!3) \rightarrow \PP(1\!:\!1\!:\!2)$ onto its first three coordinates and an embedding $\phi$ of $\PP(1\!:\!1\!:\!2)$ as a singular quadric in $\PP^3$, which is illustrated in Figure~\ref{fig:diagram1}.
\begin{figure}[h]
	\centering
	\begin{tikzpicture}
	\node (P2) {$\PP^2$};
	\node[anchor=base west,xshift=1cm] (P1123) at (P2.base east) {$\PP(1\!:\!1\!:\!2\!:\!3)$};
	\node[anchor=base west,yshift=-25mm] (P112) at (P1123.base west) {$\PP(1\!:\!1\!:\!2)$};
	\node[anchor=base west,yshift=-25mm] (P3) at (P112.base west) {$\PP^3$};
	\draw[->,dashed] (P2) -- node[above] {$\psi$} (P1123);
	\draw[->] ($(P1123.south west)+(0.25,0)$) -- node[left] {$\pi$} node[right]
	{%
		\begin{tikzpicture}[every node/.style={font=\scriptsize}]
		\node (P1123elem) {$(s\!:\!t\!:\!w\!:\!r)$};
		\draw[|->] ($(P1123elem.south west)+(0.75,0)$) -- ++(0,-0.5);
		\node[anchor=base west,yshift=-10mm] at (P1123elem.base west) {$(s\!:\!t\!:\!w)$};
		\end{tikzpicture}%
	}
	++(0,-1.7);
	\draw[->] ($(P112.south west)+(0.25,0)$) -- node[left] {$\phi$} node[right]
	{%
		\begin{tikzpicture}[every node/.style={font=\scriptsize}]
		\node (P112elem) {$(s\!:\!t\!:\!w)$};
		\draw[|->] ($(P112elem.south west)+(0.75,0)$) -- ++(0,-0.5);
		\node[anchor=base west,yshift=-10mm] at (P112elem.base west) {$(s^2\!:\!st\!:\!t^2\!:\!w)$};
		\end{tikzpicture}%
	}
	++(0,-1.7);
	\node[anchor=base west,xshift=10mm] (X) at (P1123.base east) {$X:=\psi(\mathbb P^2)$};
	\draw[draw opacity=0] (X) -- node[sloped] (supsetTop) {$\supseteq$} (P1123);
	\node[anchor=base west,yshift=-25mm] (Cprime) at (X.base west) {$C':=\text{BranchCurve}(\pi|_X)$};
	\node[anchor=base,yshift=-25mm] (supsetMid) at (supsetTop.base) {$\supseteq$};
	\node[anchor=base west,yshift=-25mm] (C) at (Cprime.base west) {$C$};
	\node[anchor=base,yshift=-25mm] (supsetLow) at (supsetMid.base) {$\supseteq$};
	\draw[->] ($(Cprime.south west)+(0.25,-0.1)$) -- ++(0,-1.6);
\end{tikzpicture}\vspace{-5mm}
\caption{The del Pezzo surface $X$ of degree $1$ and the branch curve $C'$.}
\label{fig:diagram1}
\end{figure}

The projection $\pi$ is a generically 2-to-1 rational map branched along the curve $C'$ of weighted degree six given by $r=0$. The defining equation of $C'$ in $\PP(1\!:\!1\!:\!2)$ is hence given by
\begin{equation}\label{eq:branch curve}
C'\colon \quad 0 = w^3+f_2(s,t) \cdot w^2+f_4(s,t) \cdot w+f_6(s,t). \tag{$\ast$}
\end{equation}
The embedding $\phi$ is an isomorphism between $\PP(1\!:\!1\!:\!2)$ and $\mathcal{Z}(x_1^2-x_0x_2) \subseteq \PP^3$, where $(x_0\!:\!x_1\!:\!x_2\!:\!x_3)$ are the homogeneous coordinates of $\PP^3$. The curve $C:=\phi(C')$ is therefore a space sextic which lies on a singular quadric.

\begin{example}\label{ex:delPezzoCurve}
Consider the configuration $\mathcal P=\{P_1,\ldots,P_8\}\subseteq\PP\ZZ^3$ of the following $8$ points as in \cite[Example~2.2]{KRSS17}:
\begin{align*}
	P_1& = (1\!:\!0\!:\!0), &P_5&=(10\!:\!11\!:\!1),&
	P_2& = (0\!:\!1\!:\!0), &P_6&=(19\!:\!-11\!:\!12),\\
	P_3& = (0\!:\!0\!:\!1), &P_7&=(15\!:\!19\!:\!-20),&
	P_4& = (1\!:\!1\!:\!1), &P_8&=(27\!:\!2\!:\!17).
\end{align*}
One can verify that the points in $\mathcal P\subseteq \PP \QQ^3$ are in general position, which also implies that the points in $\overline{\mathcal P}\subseteq \PP\FF_p^3$ remain in general position for all but finitely many primes~$p$. For instance, for $p=97$ they stay in general position, while for $p=5$ we have $\overline P_2 = \overline P_5 \in \PP \FF_5^3$.

From now on, consider the base field $k:=\mathbb F_{97}$. We compute two linearly independent cubics $s,t$ that vanish on $\mathcal P$ and a sextic $w$ not divisible by $s$ or $t$, vanishing doubly on $\mathcal P$. Our implementation gives
\begin{align*}
	s&=x_0x_1^2+33x_0^2x_2+74x_0x_1x_2+77x_1^2x_2+47x_0x_2^2+59x_1x_2^2,\\
	t&=x_0^2x_1+86x_0^2x_2+57x_0x_1x_2+35x_1^2x_2+15x_1x_2^2,\\
	w&=x_0^4x_1x_2+72x_0^3x_1^2x_2+68x_0^2x_1^3x_2+55x_0x_1^4x_2+67x_0^4x_2^2\\
	&\quad+78x_0^3x_1x_2^2+11x_0^2x_1^2x_2^2+18x_0x_1^3x_2^2+43x_1^4x_2^2+23x_0^2x_1x_2^3\\
	&\quad+55x_0x_1^2x_2^3+64x_1^3x_2^3+7x_0^2x_2^4+96x_0x_1x_2^4+21x_1^2x_2^4.
\end{align*}
This results in the following space sextic $C\subseteq\PP^3$, which can be verified to be smooth:
\begin{equation*}
	C\colon
	\begin{cases}
		0 = x_1^2-x_0x_2, \\
		0 = 94x_0^3+88x_0^2x_1+27x_0^2x_2+16x_0x_1x_2+82x_0x_2^2+72x_1x_2^2+73x_2^3\\
		\qquad+18x_0^2x_3+17x_0x_1x_3+84x_0x_2x_3+43x_1x_2x_3+37x_2^2x_3+63x_0x_3^2\\
		\qquad+64x_1x_3^2+63x_2x_3^2+74x_3^3.
	\end{cases}
\end{equation*}
\end{example}

\subsection{Constructing the odd tritangents}
Note that any tangent plane of the singular quadric $\mathcal{Z}(x_1^2-x_0x_2) \subseteq \PP^3$ is tangent to the curve $C$ at its points of intersection; the plane intersects the quadric in a double line passing through the node of the cone, which in turn intersects the cubic in three double points.
However, because all intersection points lie on a line through the node of the cone, the tangent plane arises from an even theta characteristic rather than an odd theta characteristic, which is why we will disregard them in the context of this paper.

\begin{proposition}\label{prop:oddThetaCharacteristics}
Let $C$ be a space sextic curve on a singular quadric $Q$. Then any plane corresponding to an odd theta characteristic of $C$ does not pass through the singularity of $Q$.
\end{proposition}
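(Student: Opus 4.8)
The plan is to argue by contradiction: suppose $\theta$ is an odd theta characteristic whose tritangent plane $H_\theta$ passes through the singularity $P$ of the cone $Q$, and derive that $\theta$ is in fact even, contradicting the hypothesis. The key geometric input is the observation made just before the statement: the singular quadric $Q$ is a cone with vertex $P$, and the intersection of any plane $H$ through $P$ with $Q$ is not a smooth conic but a \emph{degenerate} one, namely a pair of lines (or a double line) meeting at $P$. Since $C\subseteq Q$, the effective divisor $C\cap H_\theta=2D_\theta$ is supported on these lines through $P$. I would first analyze how the two rulings through $P$ meet $C$: each line of the cone through $P$ meets $C$ in a fixed number of points governed by the degree and the cone structure, and this furnishes a distinguished divisor class on $C$ coming from the ruling.

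The heart of the argument is to identify $\theta$ explicitly in terms of this ruling. Concretely, let $R$ denote the divisor class cut on $C$ by a line of the ruling of the cone (equivalently, the pullback of a point under the projection $\pi\colon C\to\PP^1$ from the vertex $P$, which realizes $C$ as the degree-$3$ cover in \eqref{eq:branch curve}). If $H_\theta$ passes through $P$, then $H_\theta\cap Q$ is a union of two ruling lines, so $2D_\theta$ is a sum of two such ruling divisors, giving $2\theta \sim 2R$ as divisor classes. The next step is to upgrade this to $\theta\sim R$ (up to a controlled correction): since $2\theta=\kappa_C$, one checks using the projection formula and the cone geometry that $R$ itself represents a theta characteristic, i.e.\ $2R\sim\kappa_C$. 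This is exactly the even (in fact vanishing) theta characteristic attached to the singular quadric, the one flagged as the ``even'' case in the discussion preceding the proposition. I would then conclude that $\theta\sim R$, and compute $\dim H^0(C,\theta)=\dim H^0(C,R)$ directly from the $3{:}1$ projection: $R$ is the $g^1_3$ pulled back from $\PP^1$, which has $h^0\ge 2$, forcing $\dim H^0(C,\theta)$ to be even and hence $\theta$ to be even. This contradicts the assumption that $\theta$ is odd.

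The step I expect to be the main obstacle is passing cleanly from the numerical/linear-equivalence statement $2\theta\sim 2R$ to the stronger $\theta\sim R$, since a priori $\theta$ and $R$ could differ by a nontrivial $2$-torsion class $\alpha\in \JC[2]$. To close this gap I would use the rigidity coming from the cone: the tangent plane $H_\theta$ along the ruling forces $2D_\theta$ to be supported with even multiplicity along the ruling lines, so the effective representative $D_\theta$ is itself pulled back from the base $\PP^1$ of the projection, which pins down the class of $\theta$ as a genuine multiple of $R$ rather than merely a class with the same double. Verifying that this pullback genuinely has positive-dimensional sections (so that $\theta$ is even, indeed vanishing) then reduces to the standard fact that the $g^1_3$ on the trigonal-type projection of $C$ has $h^0=2$, which I would justify via Riemann--Roch on the genus-$4$ curve $C$. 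Once this identification is secured, the parity computation is immediate and the contradiction follows.
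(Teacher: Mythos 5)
Your proposal follows, in substance, the same route as the paper: both arguments come down to the geometry of a tangent-everywhere plane through the node of the cone, and both end by identifying the resulting half-divisor with the vanishing even theta characteristic. The paper's version is direct and very short: if $H$ passes through the node and is tangent to $C$ at every point of $C\cap H$, then $H\cap Q$ is a union of two lines, each cutting a degree-$3$ divisor on $C$; since every point of $C\cap H$ has even multiplicity, the two lines must coincide; hence $\frac{1}{2}(C\cap H)$ is a ruling divisor, whose class is the vanishing even theta characteristic, so $H$ cannot come from an odd one.

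The one place where your write-up is genuinely thinner than it needs to be is exactly the step you flag as the main obstacle: the inference from ``all multiplicities of $C\cap H_\theta$ are even'' to ``$D_\theta$ is pulled back from the base $\PP^1$'' (equivalently, $\theta\sim R$ on the nose rather than merely $2\theta\sim 2R$). As written this is asserted, not proved, and it is where the entire content of the proposition sits; ``rigidity coming from the cone'' needs to be made into an argument. The argument is a parity count. Write $H_\theta\cap Q=L_1\cup L_2$. Since $C$ is smooth it cannot pass through the vertex (a complete intersection with the cone would have multiplicity at least $2$ there), so every point of $C\cap H_\theta$ lies on exactly one $L_i$, and the multiplicities of $C\cap H_\theta$ at the points on $L_i$ sum to $\deg(C\cap L_i)=3$. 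If $L_1\neq L_2$, this exhibits $3$ as a sum of even numbers, a contradiction; hence $L_1=L_2$, and $D_\theta=C\cap L_1$ is literally a ruling fiber, so $\theta=[R]$ with no $2$-torsion ambiguity left to resolve. Once this sentence is inserted your proof is complete and coincides with the paper's; the detour through $2\theta\sim 2R$, the discussion of a possible twist by $\alpha\in\JC[2]$, and the verification that $2R\sim\kappa_C$ all become unnecessary (though correct), leaving only your Clifford/Riemann--Roch check that $h^0(R)=2$ is even, which matches the paper's conclusion.
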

\begin{proof}
Let $H$ be a plane passing through the node of $Q$ such that $H$ is tangent to $C$ at each point in $C \cap H$. As $H$ passes through the node of $Q$, we have that $H \cap Q$ is the union of two lines which each meet $C$ in three points. As each point in $C \cap H$ has even multiplicity, the two lines must be coincident. But then the class of the divisor $\frac{1}{2}(C \cap H)$ of $C$ is the vanishing even theta characteristic.
\end{proof}

It turns out that the $120$ tritangents of $C$ have a natural description in the framework of our construction. Recall that $l_\theta$ is the linear form corresponding to the odd theta characteristic $\theta$. If $H_\theta := \mathcal{Z}(l_\theta)$ is a tritangent of $C$, then $\phi^{-1}(H_\theta)$ is a curve in $\PP(1\!:\!1\!:\!2)$ of weighted degree $2$ tangent to $C'$ at $3$ points (counting multiplicity).
\begin{definition}
Let $C'$ be a curve of the form in Equation (\ref{eq:branch curve}) and let $\phi$ be an embedding of $\PP(1\!:\!1\!:\!2)$ into $\PP^3$. We call $\phi^{-1}(H_\theta)$ a \emph{tritangent curve} of $C'$.
\end{definition}

Every tritangent curve of $C'$ is the image of exactly two exceptional curves $(e,e')$ on the del Pezzo surface $X$ under $\pi$, which are conjugate under the Bertini involution of $X$. We differentiate between the following types of tritangents, which are named after the degrees of $\overline{\psi^{-1}e},\overline{\psi^{-1}e'}\subseteq\PP^2$ \cite{KRSS17}:

\begin{enumerate}
	\item[(0,6)]
	$e$ and $e'$ are the exceptional fiber over a point $P_i$ and the strict transform of the sextic vanishing triply at $P_i$ and doubly at the other seven points.
	\item[(1,5)]
	$e$ and $e'$ are the strict transforms of the line through $\{P_i, P_j\}$ and the quintic vanishing at all eight points and doubly at the six points in $\mathcal{P} \backslash \{P_i, P_j\}$ (see Figure~\ref{fig:types} left).
	\item[(2,4)]
	$e$ and $e'$ are the strict transforms of the conic through $\mathcal P\setminus \{P_i,P_j,P_k\}$ and the quartic vanishing at $\mathcal{P}$ and doubly at $P_i,P_j,P_k$ (see Figure~\ref{fig:types} middle).
	\item[(3,3)]
	$e$ and $e'$ are the strict transforms of the cubic vanishing doubly at $P_i$, non-vanishing at  $P_j$, and vanishing singly at $\mathcal{P} \backslash \{P_i,P_j\}$ and the cubic vanishing doubly at $P_j$, non-vanishing at $P_i$, and vanishing singly at $\mathcal{P} \backslash \{P_i,P_j\}$ (see Figure~\ref{fig:types} right).
\end{enumerate}

\begin{figure}[h]
\centering
\begin{tikzpicture}
\node (middle) {
	\begin{tikzpicture}[scale=0.2,every node/.style={font=\footnotesize}]
	\draw[red] plot[smooth cycle, tension=1] coordinates { (0,0) (12,0) (12,3) (0,-9) (3,-9) (3,3) };
	\draw plot[smooth cycle, tension=1] coordinates { (-1,1) (13.5,1) (13.5,2.5) (0,2.5) };
	\fill[red] (3.45,-0.55) circle (8pt)
	node[anchor=north east,xshift=1mm] {$P_i$};
	\fill[red] (7.85,-0.55) circle (8pt)
	node[anchor=north west,xshift=-1mm] {$P_j$};
	\fill[red] (3.55,-4.95) circle (8pt)
	node[anchor=west] {$P_k$};
	\fill (-0.7,1) circle (8pt);
	\fill (0,2.45) circle (8pt);
	\fill (13.7,1) circle (8pt);
	\fill (14.45,2.4) circle (8pt);
	\fill (11.55,2.6) circle (8pt)
	node[anchor=south east,xshift=5mm,yshift=1mm] {$\mathcal P\setminus\{P_i.P_j,P_k\}$};
	\draw[fill=white] (3.1,2.6) circle (8pt);
	\draw[fill=white] (3.4,0.75) circle (8pt);
	\draw[fill=white] (9.35,0.75) circle (8pt);
	\end{tikzpicture}
};
\node[anchor=east] (left) at (middle.west) {
	\begin{tikzpicture}[scale=0.15,every node/.style={font=\footnotesize}]
	\draw[red] plot[smooth, tension=1] coordinates { (0,0) (4,-2) (9,0) (7,4) (4,2) (9,-2) (14,-1) (15,2) (11,1) (13,-5) (18,-7) (20,-4) (16,-3) (15,-9) (18,-12) (21,-10) (17,-8) (12,-12) (12,-16) (15,-17) (16,-14) (9,-13) (7,-15) (10,-17) (9,-11) (4,-11) (0,-18) };
	\draw (-2,-15) -- (22,-15);
	\fill[red] (7.5,-1.45) circle (8pt);
	\fill[red] (11.25,-2.35) circle (8pt);
	\fill[red] (14.75,-6.6) circle (8pt);
	\fill[red] (15,-8.75) circle (8pt)
	node[anchor=east,yshift=2mm] {$\mathcal P\setminus\{P_i,P_j\}$};
	\fill[red] (11.5,-12.9) circle (8pt);
	\fill[red] (10.05,-12.9) circle (8pt);
	\fill (16.75,-15) circle (8pt);
	\fill (7,-15) circle (8pt);
	\draw[fill=white] (1.15,-15) circle (8pt);
	\draw[fill=white] (10.35,-15) circle (8pt);
	\draw[fill=white] (11.45,-15) circle (8pt);
	\node[anchor=north east,xshift=1mm] at (7,-15) {$P_i$};
	\node[anchor=north west,xshift=-1mm] at (16.75,-15) {$P_j$};
	\end{tikzpicture}
};
\node[anchor=west,xshift=3mm] (right) at (middle.east) {
	\begin{tikzpicture}[scale=0.2,every node/.style={font=\footnotesize}]
	\draw[red] plot[smooth, tension=1] coordinates { (-2,-3) (6,12) (3,12) (9,0) (17,18) };
	\draw plot[smooth, tension=1] coordinates { (-3,-2) (12,6) (12,3) (0,9) (18,17) };
	\fill[red] (3.95,7.5) circle (8pt);
	\node[red,anchor=east] at (3.95,7.5) {$P_i$};
	\fill (7.5,3.95) circle (8pt);
	\node[anchor=north] at (7.5,3.95) {$P_j$};
	\fill[black!40] (-0.5,-0.5) circle (8pt)
	node[anchor=north west,xshift=-1mm] {$\mathcal P\setminus\{P_i,P_j\}$};
	\fill[black!40] (5.4,2.8) circle (8pt);
	\fill[black!40] (13.2,6.2) circle (8pt);
	\fill[black!40] (11.6,3.1) circle (8pt);
	\fill[black!40] (4.7,4.7) circle (8pt);
	\fill[black!40] (2.9,5.4) circle (8pt);
	\draw[fill=white] (3.1,11.6) circle (8pt);
	\draw[fill=white] (6.2,13.1) circle (8pt);
	\draw[fill=white] (16.55,16.55) circle (8pt);
	\end{tikzpicture}
};
\end{tikzpicture}\vspace{-7mm}
\caption{pairs of exceptional curves of type $(1,5), (2,4)$, and  $(3,3)$}
\label{fig:types}
\end{figure}
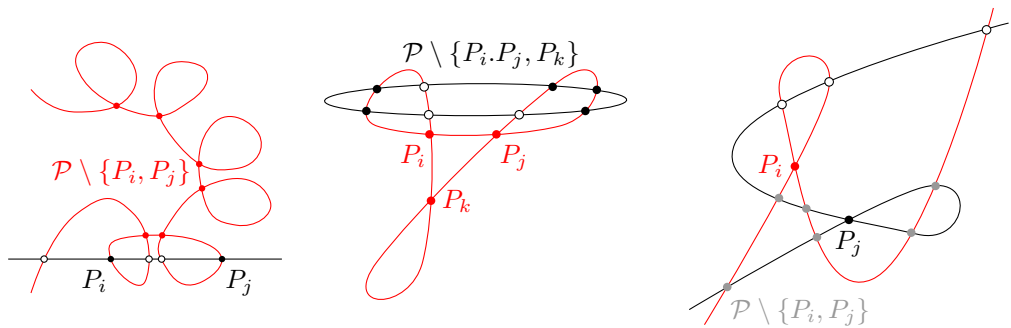

Of the $120$ tritangents, there are $8$, $\binom{8}{2}$, $\binom{8}{3}$ and $\binom{8}{2}$ tritangents of type $(0,6)$, $(1,5)$, $(2,4)$ and $(3,3)$ respectively. For tritangents of type $(0,6)$, the three components of the tangent cone of $e'$ determine the three contact points. For tritangents of the latter three types, the two preimages intersect in three points outside of $\mathcal P$. These are mapped to the contact points.

\begin{example}\label{ex:delPezzoTritangents}
Consider again the points $\mathcal P$ and the curve $C\subseteq \mathbb P \FF_{97}^3$ from Example~\ref{ex:delPezzoCurve}. Let $h_{12}$ and $h_{34}$ be the lines through the points $P_1,P_2$ and $P_3,P_4$ respectively and let $h_{125}$ and $h_{345}$ be the conics through the points $\mathcal P\setminus \{P_1,P_2,P_5\}$ and $\mathcal P\setminus\{P_3,P_4,P_5\}$ respectively. They are given by
\begin{align*}
h_{12} &= x_2, & h_{125} &= 19x_0^2 + 58x_0x_1 + 65x_1^2 + 21x_0x_2 + 31x_1x_2,\\
h_{34} &= x_0-x_1, & h_{345} &= 3x_0x_1 + 89x_0x_2 + 3x_1x_2 + 42x_2^2.
\end{align*}
Their images in $\mathcal P^3$ are cut out by the following polynomials and one can verify that each of them intersect $C$ in three double points:
\begin{align*}
l_{12} &= x_3, &l_{125} &= x_0 + 52x_1 + 23x_2 + 87x_3,\\
l_{34} &= x_0 + 70x_1 + 8x_2 + 43x_3, &l_{345} &= x_0 + 71x_1 + 65x_2 + 94x_3.
\end{align*}
\end{example}

\subsection{Constructing the Steiner system}

Let $P_1, \dots, P_8$ be eight points in $\PP^2$ in general position, let $X$ be the del Pezzo surface of degree $1$ obtained by blowing up $\PP^2$ at $\{P_1, \ldots, P_8\}$, and let $C$ the space sextic lying on $X$ as in the Section \ref{sec:ConsTheCurv}. The Steiner system of $C$ can be much more efficiently constructed than for a generic space sextic by making use of the initial eight points in $\PP^2$:

We label each tritangent by a subset of $\{1,\ldots,9\}$ depending on their types as follows:
\[ L(H) :=
\begin{cases}
\{1,\ldots,8\}\setminus \{i\} & \text{if }H \text{ is of type }(0,6),\\
\{1,\ldots,8,9\}\setminus \{i,j\} & \text{if }H \text{ is of type }(1,5),\\
\{i,j,k\} &\text{if }H \text{ is of type }(2,4),\\
\{i,j,9\} &\text{if }H \text{ is of type }(3,3),
\end{cases}
\]
where for each type $i,j,k$ are as in Figure \ref{fig:types}. Given four theta characteristics $\theta_1,\ldots,\theta_4$ and their tritangents $H_1,\ldots,H_4$, we then have
\[ \theta_1,\ldots, \theta_4 \text{ syzygetic}\quad \Longleftrightarrow \quad L(H_1) \mathbin{\Delta} \ldots \mathbin{\Delta} L(H_4)=\emptyset. \]

The fact that the Steiner system of $C$ can be constructed purely combinatorially is rooted in a classical connection between theta characteristics of $C$ and quadratic forms over $\JC[2]$ \cite[Section 5]{Dol2012}:

The $256$ points in $\JC[2]$ together with the Weyl-pairing
can be regarded as a symplectic vector space over $\FF_2$ of dimension $8$. The space of quadratic forms of $\JC[2]$, which we denote by $\mathcal{Q}(\JC[2])$, is a homogeneous space of $\JC[2]$. The disjoint union $\JC[2] \coprod \mathcal{Q}(\JC[2])$ has the natural structure of an $\FF_2$-vector space of dimension $9$. Each quadratic form corresponds to a theta characteristic of $C$.

Let $\res_C\colon \Pic(X) \rightarrow \Pic(C)$ be the natural restriction and set $v_i:=\res_C(e_i+\kappa_X)$, where $e_1,\ldots,e_8$ are the eight exceptional divisors of $X$ corresponding to the eight points $P_1,\ldots,P_8$ under the blow-down map. We may construct an Aronhold basis $B$ of $\JC[2] \coprod \mathcal{Q}(\JC[2])$ from the set $\{ v_1, \ldots, v_8, \res_C(-\kappa_X) \}\subseteq\Pic(C)$ via \cite[Theorem II A1.1.]{RauFar1974}. The labeling of a tritangent plane stated above can be understood in terms of the $B$-coordinates of the corresponding odd theta characteristic, viewed as an element of $\JC[2] \coprod \mathcal{Q}(\JC[2])$. For further details, see \cite[Section 1.1.4]{Tur2018}.

\begin{example}
Consider again the curve $C\subseteq\PP \FF_{97}^3$ in Example~\ref{ex:delPezzoCurve} and its four tritangents from Example~\ref{ex:delPezzoTritangents}. Their labels are:
\begin{align*}
&L(\mathcal Z(l_{12})) = \{ 3,4,5,6,7,8,9\}, & L(\mathcal Z(l_{125})) = \{1,2,5\}. \\
&L(\mathcal Z(l_{34})) = \{1,2,5,6,7,8,9\}, & L(\mathcal Z(l_{345})) = \{3,4,5\}.
\end{align*}
They have an empty symmetric difference and are therefore syzygetic. Indeed, one can verify that there exists a quadric which is not $\mathcal{Z}(x_1^2-x_0x_2)$ that vanishes on all $12$ contact points of the tritangents with $C$.
\end{example}


\section{Identifying space sextics from del Pezzo surfaces of degree one} \label{sec:IdSSfromDP}

Given a space sextic curve $C$ in $\PP^3$ which comes from blowing up the plane at eight points, we will construct some collection of $8$ points in $\PP^2$ such that the construction applied to these $8$ points gives a space sextic isomorphic to $C$.
If a del Pezzo surface $X$ of degree $1$ has $8$ pairwise orthogonal exceptional curves defined over $k$, then the blow-down of $X$ along these $8$ curves is isomorphic to $\PP^2$ \cite[Theorem 24.4.iii]{manin1986cubic}. The $8$ exceptional curves mark $8$ points in the plane. Since this is true of any collection of $8$ pairwise orthogonal exceptional curves of $X$, we see that the set of $8$ points in the plane which construct the branch curve of $X$ is not unique, even up to linear transformations. The goal of our algorithm is to find one such set.

Over an algebraically closed field of characteristic not $2$, a space sextic curve $C$ arises from a del Pezzo surface of degree $1$ if and only if the unique quadric in $\PP^3$ containing $C$ is singular \cite[Theorem ~24.4.iii]{manin1986cubic}. However, when $k$ is not algebraically closed the situation is slightly more complicated. Nevertheless, our implementation is able to detect when a curve comes from eight $k$-rational points in the plane. Our method of detection and recovery uses the $120$ tritangents given as input. By assuming that the $120$ odd theta characteristics are defined over $k$, we can return to the simplicity of the algebraically closed case.

\begin{proposition}\label{prop:singularQuadricEvenTheta}
	Then a space sextic curve $C$ defined over $k$ arises from a del Pezzo surface of degree $1$ if and only if the unique quadric in $\PP^3$ containing $C$ is singular and all $120$ odd theta characteristics are defined over $k$.
\end{proposition}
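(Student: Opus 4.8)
The plan is to prove the two implications separately, reducing to the algebraically closed case treated by Manin and then controlling the descent to $k$ through the Galois action on $\Pic(X_{\bar k})$. For the forward direction, suppose $C$ is produced by the construction of Section~\ref{sec:ConsTheCurv} from eight $k$-rational points $P_1,\dots,P_8$. By construction $C$ lies on $\phi(\PP(1\!:\!1\!:\!2))=\mathcal Z(x_1^2-x_0x_2)$, and since the quadric through $C$ is unique, $Q_C$ equals this singular quadric. Moreover each of the $240$ exceptional curves of $X$ is either an exceptional divisor $E_i$ or the strict transform of a line, conic, cubic, quartic, quintic or sextic through prescribed subsets of the $P_i$, as listed before Figure~\ref{fig:types}; since the $P_i$ are $k$-rational, each such linear system, and hence each exceptional curve, is defined over $k$. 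Consequently every tritangent curve $\pi(e)=\pi(e')$, every tritangent plane $H_\theta$, and therefore every odd theta characteristic $\theta$ is defined over $k$.

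For the converse, assume $Q_C$ is singular and all $120$ odd theta characteristics are $k$-rational. First I would pin down the quadric. A smooth genus $4$ curve on a singular quadric has a unique vanishing even theta characteristic $\theta_0$, which is therefore Galois-invariant and $k$-rational. Its linear system $|\theta_0|$ is the $g^1_3$ whose members are the divisors $C\cap L$ cut out by the rulings $L$ of $Q_C$; the $k$-rationality of $\theta_0$ makes this pencil, and hence the base conic of the cone, isomorphic to $\PP^1_k$, so that $Q_C\cong\PP(1\!:\!1\!:\!2)$ over $k$. This lets me realise $C$ as $\phi(C')$ for a curve $C'$ of the form~(\ref{eq:branch curve}) and build a del Pezzo surface $X$ of degree $1$ over $k$ as a double cover of the cone branched along $C$, with Bertini involution $\beta$ defined over $k$; by the cited result of Manin over $\bar k$ this $X$ is smooth and carries the expected $240$ exceptional curves.

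Next I would translate the hypothesis into a statement about the Galois action. Since each nonzero $\alpha\in\JC[2]$ occurs as a difference of the two odd theta characteristics $\theta$ and $\theta+\alpha$ (the Steiner complex $\Sigma_{C,\alpha}$ is nonempty, see Remark~\ref{rem:SteinerSystem}), the $k$-rationality of all odd theta characteristics forces every $2$-torsion point to be $k$-rational, i.e. $\operatorname{Gal}(\bar k/k)$ acts trivially on $\JC[2]$. Under the Galois-equivariant isomorphism induced by $\res_C$ between $\kappa_X^{\perp}/2\kappa_X^{\perp}$ and $\JC[2]$, this means Galois acts trivially on $\kappa_X^{\perp}\cong E_8$ modulo $2$. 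Since the only automorphisms of the $E_8$ lattice congruent to the identity modulo $2$ are $\pm\mathrm{id}$ (the kernel of $W(E_8)\to O^{+}_8(\FF_2)$ is its centre), and since $\beta_*$ fixes $\kappa_X$ and acts as $-\mathrm{id}$ on $\kappa_X^{\perp}$, each $\sigma\in\operatorname{Gal}(\bar k/k)$ acts on $\Pic(X_{\bar k})$ either trivially or as $\beta_*$.

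The remaining, and main, obstacle is the Bertini ambiguity: a priori Galois might swap some exceptional curve $e$ with its conjugate $\beta(e)$, so that no single exceptional curve is $k$-rational and $X$ admits no blow-down to $\PP^2_k$. I would resolve this by noting that $\sigma\mapsto(\mathrm{id}\text{ or }\beta_*)$ is a homomorphism $\operatorname{Gal}(\bar k/k)\to\{\pm1\}$, hence equals the quadratic character $\chi_d$ attached to some $d\in k^{\times}/(k^{\times})^2$, and that the double covers of $Q_C$ branched along the fixed curve $C$ form a torsor under $k^{\times}/(k^{\times})^2$ on which twisting alters the Galois action on $\kappa_X^{\perp}$ exactly by $\beta_*$. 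Replacing $X$ by the twist $X'$ corresponding to $d$ therefore trivialises the action while keeping the branch curve equal to $C$; now every exceptional curve of $X'$ has a Galois-invariant class, hence (being the unique effective divisor in its class) is defined over $k$, so any eight pairwise orthogonal exceptional curves are $k$-rational. Blowing them down via \cite[Theorem~24.4.iii]{manin1986cubic} yields $\PP^2_k$ together with eight $k$-rational points, and running the construction of Section~\ref{sec:ConsTheCurv} on these points returns the branch curve of $X'$, namely $C$. The delicate points I expect to occupy the real work are the equivariance and bijectivity of $\res_C$ on these lattices and the verification that the quadratic twist leaves the branch curve unchanged.
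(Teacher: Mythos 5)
Your forward direction and most of your reverse direction are sound, but there is one genuine gap, and it sits exactly at the point where the paper takes a different (and necessary) route: the normalization of the quadric. You claim that the vanishing even theta characteristic $\theta_0$, being unique, is ``Galois-invariant and $k$-rational,'' and that this makes the ruling pencil $|\theta_0|$, hence the base conic of the cone, isomorphic to $\PP^1_k$. Galois-invariance of a divisor class does not imply that the class is represented by a $k$-rational divisor, nor that its linear system is trivial as a Brauer--Severi variety: $|\theta_0|$ is a priori only a smooth conic over $k$, which may have no $k$-points, and correspondingly $Q_C$ may a priori be a cone over a pointless conic rather than $\mathcal{Z}(x_1^2-x_0x_2)$. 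Note that at this step your argument never invokes the hypothesis that the $120$ odd theta characteristics are defined over $k$ --- a warning sign, since without that hypothesis the conclusion can fail. The paper closes exactly this hole differently: it takes a tritangent plane $H$ defined over $k$ (each odd $\theta$ has a unique effective representative $D_\theta$, so $H_\theta$ is Galois-stable, hence defined over $k$); by Proposition~\ref{prop:oddThetaCharacteristics} $H$ misses the vertex, so $Q_C\cap H$ is a smooth conic carrying the $k$-rational divisor $D=\tfrac12(H\cap C)$ of odd degree $3$, which forces a $k$-point on that conic and puts $Q_C$ in standard form. (Alternatively you could repair your own route: the divisors $D_\theta$ give $C$ index dividing $3$, which kills the $2$-torsion Brauer obstruction attached to $|\theta_0|$; but some such argument must be supplied.)

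Beyond this point your argument is correct and is in fact more detailed than the paper's. Where the paper asserts in one sentence that, since all $120$ tritangents of $C$ are defined over $k$, there is a quadratic twist of $X$ with all $240$ exceptional curves defined over $k$, you actually prove it: rationality of all odd theta characteristics gives rationality of $\JC[2]$ via nonempty Steiner complexes; the Galois-equivariant identification of $\kappa_X^{\perp}/2\kappa_X^{\perp}$ with $\JC[2]$ together with the fact that the kernel of $W(E_8)\to O_8^+(\FF_2)$ is $\{\pm\mathrm{id}\}$ forces each Galois element to act on $\Pic(X_{\bar k})$ either trivially or as the Bertini involution; this defines a quadratic character, and the corresponding quadratic twist (which leaves the branch curve untouched) trivializes the action, after which rigidity of exceptional curves yields their $k$-rationality. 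This is a legitimate, more conceptual filling-in of the paper's one-line assertion, and your conclusion via \cite[Theorem 24.4]{manin1986cubic} matches the paper's. So: fix the quadric-normalization step, and the proof stands.
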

\begin{proof}
	The forward direction is entirely classical. For the reverse, let $Q$ be the singular quadric containing $C$. As $C$ is smooth, we must have that $Q$ is a quadric cone. We claim that $Q$ is isomorphic over $k$ to the weighted projective space ${\PP(1\!:\!1\!:\!2)}$. Let $H$ be any tritangent plane of $C$ corresponding to an odd theta characteristic and let $D := \frac{1}{2} (H \cap C)$. By Proposition~\ref{prop:oddThetaCharacteristics}, $H$ does not pass through the singularity of $Q$, so $Q \cap H$ is a plane conic and $D$ is an odd degree divisor on $Q \cap H$. Thus, we have that $\PP^1 \cong Q \cap H$.

	Since $Q \cap H \subseteq H$ is a plane conic with a $k$-rational point, we may change coordinates so that $H = \mathcal{Z}(x_3)$ and such that $H \cap Q = \mathcal{Z}(x_3, x_1^2 - x_0 x_2)$. That is, $Q$ is isomorphic to the standard quadric cone. Since $C$ lies on a standard quadric cone, we have that $C$ lies on a degree $1$ del Pezzo surface $X$. Moreover, as all 120 tritangents of $C$ are defined over $k$, there is a quadratic twist of $X$ such that all $240$ exceptional curves are defined over $k$. The proposition now follows from \cite[Theorem 24.4]{manin1986cubic}.
\end{proof}

\begin{remark}
	Note that the construction of the space sextics from del Pezzo surfaces of degree $1$ as in \cite[Section 2]{KRSS17} considers slightly more general configurations of $8$ points in $\PP^2$. Specifically, it was only required that the set of eight points be $\operatorname{Gal}(\bar \QQ / \QQ)$-invariant in order to construct a space sextic defined over $\QQ$. For the sake of simplifying our implementation we do not consider this general setting.
\end{remark}

\subsection{The reconstruction algorithm} \label{sec: reconstruction algorithm}

The algorithm proceeds along the following general steps:
\begin{enumerate}[leftmargin=*,label={(\roman*)}]
	\item\label{step:i}
	Compute a degree $1$ del Pezzo surface $X$ as in Figure~\ref{fig:diagram1}.
	\item\label{step:ii}
	Determine the $240$ exceptional curves on $X$, possibly requiring a quadratic twisting of $X$.
	\item\label{step:iii}
	Identify $8$ pairwise orthogonal curves defined over $k$.
	\item\label{step:iv}
	Identify a genus $1$ curve $E \subseteq X$ defined over $k$, and intersect it with the $8$ exceptional curves to obtain $8$ points.
	\item\label{step:v}
	Construct a particular embedding of $E$ into $\PP^2$ with the properties expected of the blow-down of $E$ along $8$ pairwise orthogonal exceptional curves.
\end{enumerate}
We shall assume that our curve $C$ is given as the intersection of a cubic $\mathcal{Z}(f)$ and the quadric $\mathcal{Z}(x_1^2-x_0x_2)$ in $\PP^3$ with coordinates $(x_0\!:\!x_1\!:\!x_2\!:\!x_3)$. Note that up to linear transformation any singular quadric with a smooth $k$-rational point is the one we have specified. We assume that we are given the $120$ tritangent planes as linear forms $\{\ell_1 , \ldots, \ell_{120} \}$ with coefficients in $k$. Finally, we use the existence of the maps in Figure~\ref{fig:diagram1} where $C$ would be our given curve.
As before, we will denote the coordinates of $\PP(1\!:\!1\!:\!2)$ and $\PP(1\!:\!1\!:\!2\!:\!3)$ by $(s\!:\!t\!:\!w)$ and $(s\!:\!t\!:\!w\!:\!r)$ respectively and let $\pi$ be the usual projection.

To accomplish Steps \ref{step:i} and \ref{step:ii}, we compute the pullback of the curve and the tritangent planes to $\PP(1\!:\!1\!:\!2)$ to set an equation for the del Pezzo surface $X$ of degree 1. More precisely, let $F$ be the pullback of $f$ under the map $\phi$ and $h_i$ be the pullback of $l_i$ under the map $\phi$ for $i=1,\dots ,120$. We may normalize $F$ and the $h_i$ so that the leading coefficient in $w$ is $1$. We choose $\lambda \in k^\times$ such that $\Res(h_1(s,t,w), \lambda F(s,t,w), w) \in k[s,t]$ is a square over $k$. We set $r^2 - \lambda F(s,t,w)$ to be the defining equation of a del Pezzo surface $X$ in $\PP(1\!:\!1\!:\!2\!:\!3)$. By definition, $\phi$ maps the branch curve of the projection $\pi|_X$ to $C$.
Thus, $X$ is exactly the same surface occurring in Section~\ref{sec:ConsTheCurv} up to isomorphism over $\bar k$. The choice of $\lambda$ merely selects the unique quadratic twist such that the pair of exceptional curves over the tritangent curve defined by $h_1$ is split over $k$.

\begin{lemma}
	With the notation of the preceding paragraph, the subvariety
	\[ \mathcal{Z}(r^2 - \lambda F(s,t,w), h_i(s,t,w)) \subseteq X \]
	is the union of two Bertini-conjugate exceptional curves $e_{2i-1}, \ e_{2i}$. The curves are individually defined over $k$ and are explicitly given by
	\[ e_{2i-1}, \ e_{2i} :=   \mathcal{Z}\!\left(h_i(s,t,w), r \pm \sqrt{\Res(h_i(s,t,w), \lambda F(s,t,w), w) } \right). \]
\end{lemma}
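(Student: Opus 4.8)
The plan is to recognize the given subvariety as the $\pi$-preimage of the $i$-th tritangent curve and then to make its two exceptional components explicit by exploiting the double-cover structure of $X$. First I would note that, since $r^2-\lambda F(s,t,w)$ is the defining equation of $X$ and $h_i$ does not involve $r$, the subvariety $\mathcal{Z}(r^2-\lambda F, h_i)$ is exactly $(\pi|_X)^{-1}(\mathcal{Z}(h_i))$, the preimage of the tritangent curve $\mathcal{Z}(h_i)\subseteq\PP(1\!:\!1\!:\!2)$. By the classical description recalled in Section~\ref{sec: space sextics from dp surface}, this preimage is a union of two exceptional curves on $X$ interchanged by the Bertini involution. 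In the anticanonical model $X\subseteq\PP(1\!:\!1\!:\!2\!:\!3)$ the Bertini involution is realized as $(s\!:\!t\!:\!w\!:\!r)\mapsto(s\!:\!t\!:\!w\!:\!-r)$, so the two components will be swapped by $r\mapsto -r$; this gives the Bertini-conjugacy as soon as the explicit equations are available.

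Next I would compute those equations. Normalizing $h_i=w+q_i(s,t)$ with $q_i$ a binary quadratic, the resultant identity $\Res_w(w+q_i,\lambda F)=\lambda F(s,t,-q_i)$ shows that on the locus $h_i=0$ the equation of $X$ becomes $r^2=\Res(h_i,\lambda F,w)$. The tritangency of $\mathcal{Z}(h_i)$ with $C'=\mathcal{Z}(F)$ (by Proposition~\ref{prop:oddThetaCharacteristics} the tritangent meets $C'$ only away from the node) means that $F$ restricted to the conic $\mathcal{Z}(h_i)\cong\PP^1$ is a binary sextic all of whose roots have even multiplicity, hence a perfect square over $\bar k$; the same then holds for $\Res(h_i,\lambda F,w)$. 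Writing $G_i:=\sqrt{\Res(h_i,\lambda F,w)}$, the relation $r^2=G_i^2$ factors as $(r-G_i)(r+G_i)=0$, giving the two components $\mathcal{Z}(h_i, r-G_i)$ and $\mathcal{Z}(h_i, r+G_i)$, i.e. precisely $\mathcal{Z}(h_i, r\pm\sqrt{\Res(h_i,\lambda F,w)})$. Each component is a section of $\pi$ over $\mathcal{Z}(h_i)\cong\PP^1$, hence irreducible and rational, so the two factors are exactly the two exceptional curves predicted above, and they are matched with the $r\mapsto -r$ symmetry.

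The main obstacle is the claim that the two components are \emph{individually} defined over $k$, i.e. that $G_i\in k[s,t]$ rather than merely over a quadratic extension. Writing $\Res(h_i,\lambda F,w)=\lambda\cdot R_i$ with $R_i=F(s,t,-q_i)\in k[s,t]$, the fact that $R_i$ is a square over $\bar k$ with Galois-stable roots forces $R_i=c_iP_i^2$ for a monic $P_i\in k[s,t]$ and a leading coefficient $c_i\in k^\times$; the component is split over $k$ exactly when $\lambda c_i\in(k^\times)^2$. The choice of $\lambda$ guarantees this for the index corresponding to $h_1$. To upgrade to all $i$ I would invoke Proposition~\ref{prop:singularQuadricEvenTheta}: since all $120$ tritangents are defined over $k$, there is a single quadratic twist $\lambda_0$ of $X$ (implemented in this model by rescaling $\lambda$) for which all $240$ exceptional curves are defined over $k$, i.e. $\lambda_0 c_i\in(k^\times)^2$ for every $i$. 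Hence all the $c_i$ lie in one square class, and $\lambda c_i=(\lambda c_1)(c_i/c_1)$ is a product of two squares for every $i$, so $G_i\in k[s,t]$ throughout. I expect this square-class bookkeeping — together with confirming that the relevant twists are exactly $\lambda\mapsto d\lambda$ — to be the delicate point; the resultant factorization itself is a direct computation.
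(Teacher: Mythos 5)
Your proposal is correct and follows essentially the same route as the paper's proof: identify the subvariety as the preimage of the $i$-th tritangent curve (so its two components are Bertini-conjugate exceptional curves), compute the resultant as the evaluation $\lambda F(s,t,-h_i(s,t,0))$ to get the explicit factorization $r^2 = G_i^2$, and deduce $k$-rationality of each component from the existence of a single quadratic twist splitting all $240$ exceptional curves together with the choice of $\lambda$ splitting the pair over $h_1$. Your square-class bookkeeping in the last paragraph just spells out in detail what the paper compresses into the sentence ``Since we have already chosen a twist where one pair of exceptional curves is split over $k$, all of them must be so.''
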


\begin{proof}
	From Section~\ref{sec: space sextics from dp surface} we have that the image under $\pi$ of any exceptional curve is a tritangent curve and every tritangent curve arises in this way. The first statement of the lemma follows immediately.

	If $C$ arose from blowing up eight $k$-rational points in the plane, then all of the exceptional curves on some quadratic twist of $X$ must be defined over $k$. Since we have already chosen a twist where one pair of exceptional curves is split over $k$, all of them must be so.

	All that remains to be shown is the correctness of the defining equations. We have defined $h_i$ to be in the defining ideal of the tritangent curve, so the scheme
	\[
	\mathcal{Z}\big(h_i(s,t,w), r^2 - \Res(h_i(s,t,w), \lambda F(s,t,w), w) \big)
	\]
	must lie over the tritangent curve defined by $h_i$. Next, we see that
	\[
	X \cap \mathcal{Z}(h_i(s,t,w)) = \mathcal{Z}(r^2 - \lambda F(s,t,w), h_i(s,t,w))
	\]
	is reducible, with the two components defined over $k$ corresponding to the two exceptional curves over $\mathcal{Z}\!\left(h_i(s,t,w)\right) \subseteq \PP(1\!:\!1\!:\!2)$. Note $h_i(s,t,w)$ is a degree~$1$ polynomial in $w$ and both $h_i$ and $F$ are monic due to the normalization, so the resultant is really just the evaluation of $\lambda F$ at $-h_i(s,t,0)$. In particular, the resultant is a square and
	\[
	X \cap \mathcal{Z}(h_i(s,t,w)) = \mathcal{Z}\!\left(r^2 - \lambda F(s,t,-h_i(s,t,0) ), h_i(s,t,w)\right).
	\]
	The polynomial $r^2 - \lambda F(s,t,-h_i(s,t,0))$ factors over $k$, giving the two components.
\end{proof}

For Step \ref{step:iii}, we compute the Gram matrix for the intersection pairing on the lattice generated by the exceptional curves using Gr\"obner bases. We use the fact that the Weyl group of $E_8$ acts via isometry on the lattice generated by the exceptional curves to optimize the process of searching for a collection of $8$ pairwise orthogonal curves.

Step \ref{step:iv} and \ref{step:v} of our algorithm identify the eight points using the geometry of genus~$1$ curves, allowing us to avoid an expensive blow-down computation. Step \ref{step:iv} turns out to be rather easy; the genus $1$ curve $E$ is obtained by specializing $t=0$. To indirectly compute the blow-down of this genus $1$ curve, we use the following lemma.

\begin{lemma}
	Let $E$ be a plane cubic curve and let $p_1, p_2, p_3 \in E(\bar k)$ be three collinear points. Then the embedding of $E$ into $\PP^2$ is the unique embedding with ${p_1, p_2, p_3}$ collinear, up to $\Aut(\PP^2)$.
\end{lemma}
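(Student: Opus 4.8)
The plan is to reduce the statement to the classification of degree-$3$ line bundles on the smooth genus $1$ curve $E$. The starting observation is that any embedding $\iota\colon E \hookrightarrow \PP^2$ as a plane cubic is given by the complete linear system of its hyperplane class: writing $L := \iota^{*}\mathcal{O}_{\PP^2}(1)$, B\'ezout gives $\deg L = 3$, and since $K_E$ is trivial, Riemann--Roch yields $h^0(E,L) = 3$ and $h^1(E,L)=0$. Hence the three coordinate functions already span all of $H^0(E,L)$, so $\iota$ is exactly the map attached to $|L|$ together with a choice of basis of $H^0(E,L)$. Two embeddings attached to the same $L$ differ only by such a change of basis, that is, by an element of $\mathrm{PGL}_3 = \Aut(\PP^2)$. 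Consequently, up to $\Aut(\PP^2)$ the embedding depends only on the class $[L] \in \Pic^3(E)$.

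First I would translate the collinearity hypothesis into a condition on $[L]$. If $p_1, p_2, p_3$ lie on a line $\ell \subseteq \PP^2$, then $\ell \cap \iota(E)$ is an effective divisor of degree $3$ lying in $|L|$ and containing all three points; as they are distinct this forces $\ell \cap \iota(E) = p_1 + p_2 + p_3$, whence $L \cong \mathcal{O}_E(p_1 + p_2 + p_3)$. Conversely, once $L \cong \mathcal{O}_E(p_1+p_2+p_3)$ the divisor $p_1+p_2+p_3$ is a member of $|L|$, so the three points automatically lie on a common hyperplane section, i.e.\ are collinear. Thus the collinearity condition is equivalent to the identification of the hyperplane class with the single, intrinsically defined line bundle $\mathcal{O}_E(p_1+p_2+p_3)$.

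Combining the two paragraphs finishes the argument: the collinearity requirement pins $[L]$ down to $\mathcal{O}_E(p_1+p_2+p_3)$, and by the first paragraph the embedding is then determined up to $\Aut(\PP^2)$. Since $\deg \mathcal{O}_E(p_1+p_2+p_3) = 3 \geq 2\cdot 1 + 1$, this line bundle is very ample, so an embedding realizing the collinearity indeed exists and is unique up to $\Aut(\PP^2)$. I do not expect a serious obstacle here; the only point needing a little care is the bookkeeping with intersection multiplicities when the $p_i$ are allowed to coincide (a flex, or a point of tangency), which is why in the application one arranges the three points to be distinct.
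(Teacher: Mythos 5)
Your proof is correct and takes essentially the same route as the paper's: both reduce the statement to the fact that the embedding is the map given by the complete linear system of $p_1+p_2+p_3$, which is intrinsically determined by $E$ and the three points. The paper's three-line argument leaves implicit the Riemann--Roch bookkeeping (that $h^0(E,L)=3$, so the hyperplane system of any plane cubic embedding is complete), which you spell out explicitly.
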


\begin{proof}
	Let $D := p_1 + p_2 + p_3$ and let $\gamma \colon E \rightarrow \PP^2$ be an embedding. Since $D$ is a hyperplane section of this embedding, we have that $\gamma$ factors into $|D|\colon E \rightarrow \PP^2$. There is exactly one complete linear system containing $D$, so we are done.
\end{proof}

The blow-down of the genus $1$ curve $E$ will be a plane curve of arithmetic genus~$1$, which is to say, a plane cubic curve. It suffices for us to identify three points on $E$ which are collinear in the blow-down to identify the correct embedding into $\PP^2$ given by the blow-down. Crucially, \emph{the eight intersection points of $E$ with the set of eight pairwise orthogonal exceptional curves are the points where $E$ meets the eight exceptional points of the blow-up in $\PP^2$}. In order to identify three collinear points on $E$, we use a ``trivial'' lemma, which states that the effective divisors in the hyperplane class correspond to hyperplanes.

\begin{lemma}
	Let $X$ be the degree 1 del Pezzo surface obtained from blowing up $\{P_1, \ldots, P_8 \}$. Let $E \subseteq \PP^2$ be a plane cubic curve passing through the eight points, let $\res_E\colon \Pic(X) \rightarrow \Pic(E)$ be the natural restriction, and let $\ell \in \Pic(X)$ be the hyperplane class on $X$. Then the effective representatives of the divisor class $\res_E(\ell)$ are exactly the divisors of $E$ defined by lines in $\PP^2$.
\end{lemma}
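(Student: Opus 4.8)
The plan is to reduce the statement to the elementary fact that a plane cubic carries no effective divisors in the hyperplane class other than its line sections, the only real work being to make the map $\res_E$ concrete. First I would observe that since $E$ passes through each of $P_1,\dots,P_8$ simply (in particular it is smooth there), the blow-down $\sigma\colon X\to\PP^2$ restricts to an isomorphism $\sigma|_{\tilde E}\colon\tilde E\to E$ from the strict transform $\tilde E\subseteq X$ onto $E$; this is the isomorphism implicit in the definition of $\res_E$. Because the hyperplane class is $\ell=\sigma^*\mathcal{O}_{\PP^2}(1)$, pulling back and then restricting to $\tilde E$ gives
\[
\res_E(\ell)=\ell|_{\tilde E}=(\sigma|_{\tilde E})^*\bigl(\mathcal{O}_{\PP^2}(1)|_E\bigr)\cong\mathcal{O}_E(1),
\]
with no contribution from the exceptional classes $e_i$. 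Hence the assertion is equivalent to the claim that the effective members of the complete linear system $|\mathcal{O}_E(1)|$ are exactly the divisors $L\cap E$ with $L\subseteq\PP^2$ a line.

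One inclusion is immediate, since any two lines in $\PP^2$ are linearly equivalent, so each $L\cap E$ is an effective divisor lying in the single class $\mathcal{O}_E(1)=\res_E(\ell)$. For the reverse inclusion I would prove that the restriction map $H^0(\PP^2,\mathcal{O}_{\PP^2}(1))\to H^0(E,\mathcal{O}_E(1))$ is an isomorphism. Twisting the ideal-sheaf sequence of the cubic $E$ by $\mathcal{O}_{\PP^2}(1)$ yields
\[
0\longrightarrow\mathcal{O}_{\PP^2}(-2)\longrightarrow\mathcal{O}_{\PP^2}(1)\longrightarrow\mathcal{O}_E(1)\longrightarrow 0,
\]
and the desired isomorphism follows at once from $H^0(\PP^2,\mathcal{O}_{\PP^2}(-2))=H^1(\PP^2,\mathcal{O}_{\PP^2}(-2))=0$.

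Granting the surjectivity, every effective divisor $D$ in the class $\res_E(\ell)$ comes from a nonzero section of $\mathcal{O}_E(1)$, which is then the restriction of a global linear form on $\PP^2$; the zero locus of that form is a line $L$ with $L\cap E=D$, supplying the remaining inclusion. I expect the cohomological input to be entirely routine, so the only step deserving genuine care is the identification $\res_E(\ell)\cong\mathcal{O}_E(1)$ — confirming that $\tilde E\to E$ is an isomorphism and that restricting $\ell$ to $\tilde E$ introduces no correction by the classes $e_i$. Once this bookkeeping is settled the lemma is, as the surrounding text suggests, essentially formal.
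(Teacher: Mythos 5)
Your proof is correct, and its first half coincides with the paper's: the paper observes that a representative of $\ell$ is the strict transform of a line in $\PP^2$ avoiding the eight points, which restricted to $E$ gives a line section --- exactly your identification $\res_E(\ell)\cong\mathcal{O}_E(1)$ via $\ell=\sigma^*\mathcal{O}_{\PP^2}(1)$ and $\tilde E\cong E$. Where you genuinely differ is the second half: the paper compresses the only substantive point into the sentence that ``the result extends to all effective representatives via linear equivalence,'' leaving unproved why an effective divisor merely linearly equivalent to a line section must itself be cut out by a line. You supply that missing argument, showing the restriction map $H^0(\PP^2,\mathcal{O}_{\PP^2}(1))\to H^0(E,\mathcal{O}_E(1))$ is an isomorphism via the twisted ideal sequence and the vanishing of $H^0$ and $H^1$ of $\mathcal{O}_{\PP^2}(-2)$, so that the complete linear system $|\mathcal{O}_E(1)|$ consists exactly of line sections. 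This buys a self-contained proof, and one that works even when the cubic $E$ is singular away from the eight points, whereas the other natural completion (comparing the $2$-parameter family of line sections with $\dim|D|=2$ from Riemann--Roch on a genus-$1$ curve) needs $E$ smooth. The one hypothesis worth flagging is your assumption that $E$ passes through each $P_i$ simply, which you need for $\sigma|_{\tilde E}\colon\tilde E\to E$ to be an isomorphism; this is implicit in the paper's setting, where $E$ is identified with its strict transform on $X$, but it is good that your write-up makes the dependence visible.
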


\begin{proof}
	By definition, a representative of the class $\ell$ is the strict transform of a line in $\PP^2$ not passing through any of the eight points of the blow up. The result extends to all effective representatives of $\res_E(\ell)$ via linear equivalence.
\end{proof}

An appropriate curve $E$, divisor $D$, and class $\ell$ can be identified using only $X$ and the exceptional curves of $X$. The class $\ell \in \Pic(X)$ is uniquely identified by its intersection numbers \cite[Corollary 25.1.1]{manin1986cubic}; it is the unique divisor class such that $\ell^2 = 1$, $\ell\cdot e_i = 0$, and (redundantly) $\ell \cdot(-\kappa_X) = 3$. To find a representative, we use the following lemma.

\begin{lemma}
	Let $e_1', e_2'$ be a pair of orthogonal exceptional curves of $X$. There exists an exceptional curve $e$ such that	$\ell = [e + (e_1' + e_2')]$.
\end{lemma}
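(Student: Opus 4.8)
The plan is to verify the claimed identity at the level of the Picard lattice and then invoke the standard numerical characterization of exceptional curves. Recall that on the degree~$1$ del Pezzo surface $X$ the lattice $\Pic(X)$ carries the intersection pairing, and a class $D$ is represented by a unique irreducible exceptional curve precisely when $D^2 = D \cdot \kappa_X = -1$ (cf.\ \cite[Corollary 25.1.1]{manin1986cubic}, as used implicitly throughout Section~\ref{sec:IdSSfromDP}). Accordingly I would set $e := \ell - (e_1' + e_2')$, so that $\ell = [e + (e_1' + e_2')]$ holds by construction, and reduce the lemma to checking that $e$ satisfies these two numerical conditions.

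First I would compute the canonical intersection number. Since $e_1'$ and $e_2'$ are exceptional, $e_i' \cdot \kappa_X = -1$, and by definition of the hyperplane class $\ell \cdot (-\kappa_X) = 3$. Hence
\[
e \cdot \kappa_X = \ell \cdot \kappa_X - e_1' \cdot \kappa_X - e_2' \cdot \kappa_X = -3 + 1 + 1 = -1,
\]
so the anticanonical degree of $e$ is already correct. Next I would expand the self-intersection,
\[
e^2 = \ell^2 + (e_1')^2 + (e_2')^2 - 2\,\ell \cdot e_1' - 2\,\ell \cdot e_2' + 2\, e_1' \cdot e_2',
\]
and substitute $\ell^2 = 1$, each $(e_i')^2 = -1$, and $e_1' \cdot e_2' = 0$ (orthogonality of the pair), which leaves $e^2 = -1 - 2\,(\ell \cdot e_1' + \ell \cdot e_2')$. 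Once $\ell \cdot e_1' = \ell \cdot e_2' = 0$ is established this gives $e^2 = -1$, and the numerical characterization then shows $e = \ell - e_1' - e_2'$ is the class of an exceptional curve, completing the argument.

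I expect the only genuine content to be the vanishing $\ell \cdot e_1' = \ell \cdot e_2' = 0$, which is the step I would treat most carefully. This is exactly where the hypothesis must be used: $e_1'$ and $e_2'$ are two of the eight pairwise orthogonal exceptional curves defining the blow-down $X \to \PP^2$, and $\ell$ is by its very definition the class with $\ell^2 = 1$ and $\ell \cdot e_i = 0$ for those eight curves, so orthogonality of $\ell$ to $e_1'$ and $e_2'$ is built in. It is worth recording why this cannot be dropped: since $\ell$ is nef (it is the pullback of the ample class on $\PP^2$), one has $\ell \cdot e' \geq 0$ for every exceptional curve $e'$, so the required vanishing is equivalent to $e_1',e_2'$ being \emph{degree-zero} exceptional curves for the blow-down attached to $\ell$ --- that is, exceptional fibers of the chosen blow-down rather than, say, strict transforms of lines. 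Without this the formula yields $e^2 = -1 - 2\,(\ell\cdot e_1' + \ell\cdot e_2') < -1$. Since the lemma is only ever invoked for $e_1', e_2'$ drawn from the distinguished eight-element set defining $\ell$, the hypothesis is satisfied in every application, and the geometric upshot is clean: $e + e_1' + e_2'$ is an effective representative of $\ell$ whose three exceptional components each meet the anticanonical curve $E$ in a single point, producing the three collinear points required by the preceding lemma.
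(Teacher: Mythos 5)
Your proof is correct, but it takes a genuinely different route from the paper's. The paper argues geometrically: blowing down along the eight pairwise orthogonal curves presents $X$ as the blow-up of $\PP^2$ at $P_1,\ldots,P_8$ with $e_1',e_2'$ the exceptional fibers over $P_1,P_2$; taking $e$ to be the strict transform of the line through $P_1,P_2$, the total transform of that line is $e+e_1'+e_2'$, which is by definition a representative of $\ell$. You instead stay entirely in the lattice $\Pic(X)$: you set $e:=\ell-e_1'-e_2'$, verify $e^2=e\cdot\kappa_X=-1$, and invoke the numerical characterization of exceptional classes on a del Pezzo surface. The trade-off is clear: the paper's route is one line of blow-up formalism and exhibits $e$ concretely (its class is of course exactly your $\ell-e_1'-e_2'$), while yours avoids any explicit use of the blow-down but needs the existence half of the characterization --- that on a del Pezzo surface every class $D$ with $D^2=D\cdot\kappa_X=-1$ is represented by an irreducible exceptional curve. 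That fact is standard and does hold in degree $1$ (Riemann--Roch plus ampleness of $-\kappa_X$), but be careful with the reference: the paper cites \cite[Corollary 25.1.1]{manin1986cubic} only for a class being \emph{determined} by its intersection numbers; for representability you want the classification of exceptional classes elsewhere in \cite{manin1986cubic}, or the short Riemann--Roch argument. Finally, you are right --- and more explicit than the paper --- that the vanishing $\ell\cdot e_1'=\ell\cdot e_2'=0$ is the essential hypothesis: the lemma as literally stated for an arbitrary orthogonal pair of exceptional curves would be false (one gets $e^2<-1$), and it holds here precisely because $e_1',e_2'$ are drawn from the eight curves used to define $\ell$; the paper's proof uses the same hypothesis silently when it identifies $e_1',e_2'$ with the fibers over $P_1,P_2$.
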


\begin{proof}
	Note that $X$ is the blow-up of $\PP^2$ at $\{P_1, \ldots, P_8\}$. The strict transform of the line through $\{P_1,P_2\}$ is an exceptional curve $e$. Thus the pullback of the line through $\{P_1, P_2 \}$ by the blow-down, as a divisor of $X$, is $e + (e_1' + e_2')$.
\end{proof}

We are now ready to present Algorithm \ref{algo: reconstruct eight points}.

\vfill\pagebreak
\begin{algorithm}[Reconstruction of 8 points from space sextics] \label{algo: reconstruct eight points}\
	\begin{algorithmic}[1]
		\REQUIRE {$(f,\{l_1,\ldots,l_{120}\})$, where
			\begin{itemize}[leftmargin=*]
				\item $f\in k[x_0,\ldots,x_3]$ homogeneous of degrees $3$ such that $C:=\mathcal{Z}(f)\cap \mathcal{Z}(x_1^2-x_0x_2)$ space sextic,
				\item $\{l_1,\ldots,l_{120}\}$ is a list of 120 tritangent plane to $C$.
			\end{itemize}
		}
		\ENSURE{$\mathcal P\subseteq \PP^2$, a set of $8$ points such that the del Pezzo surface of degree $1$ obtained by blowing up $\PP^2$ at $\mathcal{P}$ has branch curve isomorphic to $C$}.
		\STATE Set $F(s,t,w) := f(s^2,st,t^2,w)$ to be the pullback of $f$ under the map $\phi$.
		\STATE Set $h_i := \ell_i(s^2,st,t^2,w)$ to be the pullback of  linear form $\ell_i$ under the map $\phi$.
		\STATE Normalize $F$ and $h_1, \ldots, h_{120}$.
		\STATE Choose $\lambda \in k^\times$ such that $\Res(h_1(s,t,w), \lambda F(s,t,w), w) \in k[s,t]$ is a square.
		\STATE Set the defining equation of the del Pezzo surface $X$ to be $r^2 - \lambda F(s,t,w)$.
		\STATE  Set the pair of exceptional curves corresponding to the $i$-th tritangent,
		\[ e_{2i-1}, \ e_{2i} :=   \mathcal{Z}\!\left(h_i(s,t,w), r \pm \sqrt{\Res(h_i(s,t,w), \lambda F(s,t,w), w) } \right).\vspace{-5mm} \] 
		\STATE Compute the Gram matrix $ M := (\deg (e_i \cap e_j))_{1 \leq i\neq j \leq 240}$, where the diagonal entries are set to $-1$.
		\STATE \label{step:choice}Determine a collection $\{e_i'\}$ of eight pairwise orthogonal exceptional curves by computing an $8\times 8$ principal submatrix $B$ of $M$ such that $-B$ is the identity matrix.
		\STATE Construct the elliptic curve $E$ on $X$ by setting $t=0$ and set $p_i := E \cap e_i'$.
		\STATE Identify the unique exceptional curve $e$ such that
		\begin{equation*}
			\deg (e \cap e_j') =
			\begin{cases}
				1 & \text{if $j = 1,2$}, \\
				0 & \text{otherwise}.
			\end{cases}\vspace{-3mm}
		\end{equation*}%
		\STATE \label{step:RiemannRoch}Compute $H^0(E, \mathcal{O}_E(D))$, where $D := (e \cap E) + p_1 + p_2$ is a divisor on $E$.
		\RETURN
		The images of $p_1, \ldots, p_8$ under the map defined by $H^0(E, \mathcal{O}_E(D))$.
	\end{algorithmic}
\end{algorithm}

\begin{remark}
	One challenging step in Algorithm~\ref{algo: reconstruct eight points} that we deliberately avoided expanding on is the computation of $H^0(E, \mathcal{O}_E(D))$ in Step~\ref{step:RiemannRoch}, since most computer algebra systems have existing commands for it. For example, in our \textsc{magma} implementation we use the intrinsic command \texttt{DivisorMap}.
\end{remark}

\begin{example}
	We apply our algorithm to the space sextic curve obtained in Example~\ref{ex:delPezzoCurve}. We recall that the defining equations of the space sextic $C$ are:
	\begin{align*}
		q &= x_1^2-x_0x_2, \\
		f &= 94x_0^3\!+\!88x_0^2x_1\!+\!27x_0^2x_2\!+\!16x_0x_1x_2\!+\!82x_0x_2^2\!+\!72x_1x_2^2\!+\!73x_2^3\!+\!18x_0^2x_3\!+\!17x_0x_1x_3\\
		&\qquad +\!84x_0x_2x_3\!+\!43x_1x_2x_3\!+\!37x_2^2x_3\!+\!63x_0x_3^2\!+\!64x_1x_3^2\!+\!63x_2x_3^2\!+\!74x_3^3.
	\end{align*}
	The defining equation of the del Pezzo surface $X$ in $\PP(1\!:\!1\!:\!2\!:\!3)$ with coordinates $(s\!:\!t\!:\!w\!:\!r)$ constructed in Algorithm \ref{algo: reconstruct eight points} is
	\begin{align*}
		X\colon -r^2 =&\; 2s^6 + 6s^5t + 79s^4t^2 + 54s^3t^3 + 10s^2t^4 + 49st^5 + 16t^6 + 85s^4w + 21s^3tw \\
		& + 41s^2t^2w + 36st^3w + 40t^4w + 55s^2w^2 + 22stw^2 + 55t^2w^2 + 80w^3.
	\end{align*}
	Algorithm \ref{algo: reconstruct eight points} computes some set of eight pairwise orthogonal exceptional curves. Below are the defining equations in $\PP(1\!:\!1\!:\!2\!:\!3)$.
	\begin{align*}
		e_1\colon &
		\begin{cases}
			w = 61 s^{2} + 19 s t + 83 t^{2},\\
			r = 82 s^{3} + 44 s^{2} t + 23 s t^{2} + 75 t^{3},
		\end{cases}
		& e_2\colon &
		\begin{cases}
			w = 41 s^{2} + 9 s t + 81 t^{2},\\
			r = 23 s^{3} + 13 s^{2} t + 14 s t^{2} + 37 t^{3},
		\end{cases}\\
		e_3\colon &
		\begin{cases}
			w = 45 s^{2} + 36 s t + 90 t^{2},\\
			r = 54 s^{2} t + 18 s t^{2} + 13 t^{3},
		\end{cases}
		& e_4\colon &
		\begin{cases}
			w = 4 s^{2} + 8 s t + 93 t^{2},\\
			r = 43 s^{3} + 5 s^{2} t + 22 s t^{2} + 58 t^{3},
		\end{cases}\\
		e_5\colon &
		\begin{cases}
			w = 26 s^{2} + 57 s t + 77 t^{2},\\
			r = 68 s^{3} + 55 s^{2} t + 30 s t^{2} + 95 t^{3},
		\end{cases}
		& e_6\colon &
		\begin{cases}
			w = - s^{2} + 22 s t + 66 t^{2},\\
			r = 81 s^{3} + 41 s^{2} t + 20 s t^{2} + 5 t^{3},
		\end{cases}\\
		e_7\colon &
		\begin{cases}
			w = - s^{2} + 69 s t + 27 t^{2},\\
			r = 16 s^{3} + 41 s^{2} t + 16 s t^{2} + 13 t^{3},
		\end{cases}
		& e_8\colon &
		\begin{cases}
			w = s t + 67 t^{2},\\
			r = 32 s^{3} + 40 s^{2} t + 66 s t^{2} + 24 t^{3}.
		\end{cases}
	\end{align*}
	\noindent
	The genus 1 curve $E \subseteq X$ is defined by the common zero set of $t$ and the defining equation of $X$. Finally, the resulting eight points $\mathcal P'=\{P_1',\ldots,P_8'\}$ are
	\begin{align*}
		P_1'&=(35\!:\!48\!:\!1),& P_2'& = (41\!:\!1\!:\!0),&
		P_3'&=(41\!:\!91\!:\!1),& P_4'& = (61\!:\!1\!:\!0),\\
		P_5'&=(11\!:\!14\!:\!1),& P_6'& = (27\!:\!95\!:\!1),&
		P_7'&=(52\!:\!80\!:\!1),& P_8'& = (88\!:\!68\!:\!1).
	\end{align*}
	Notice that the eight points $\mathcal P'$ are not the same as the original collection of eight points $\mathcal P$ in Example~\ref{ex:delPezzoCurve}.
	The difference potentially arises due to choice made at Step~(\ref{step:choice}) and linear transformations of $\PP^2$. In this case the linear transformation given by the following matrix maps $P_i'$ to $P_i$ for $i=1,\ldots,8$:
	\[ \begin{pmatrix}
	34 & 61 & 39 \\
	27 &  3 & 75 \\
	34 & 61 & 53
	\end{pmatrix}.\]
\end{example}


\section{Steiner systems to space sextics} \label{sec: Lehavi}

In this section, we extend Lehavi's methods \cite{Lehavi15} for reconstructing space sextics from their Steiner system to space sextics on singular quadrics and to space sextics over more general fields. We detail their implementation in \textsc{magma} and, as a consequence, obtain a simplified proof of Lehavi's results.

The method is naturally divided in two parts: the reconstruction of the unique quadric surface containing the curve and the reconstruction of a cubic surface which cuts out the curve on the quadric.

\subsection{Reconstructing the unique quadric}

For the reconstruction of the quadric, we briefly recall the notation in~\cite[Section 1]{Lehavi15}.

\begin{definition}
	Let $V_C:=H^0\!\left(\mathcal O_{|\kappa_C|^\ast}(2)\right)$ be the vector space of degree $2$ forms on the canonical $\PP^3$ containing $C$.

	For all $\alpha\in \JC[2]\setminus\{0\}$ and all $\{\theta,\theta+\alpha\}\in\Sigma_{C,\alpha}$, let $q_{\theta,\theta+\alpha}:= l_{\theta}\cdot l_{\theta+\alpha}\in V_C$ be the quadric form cutting out the two tritangent planes corresponding to $\theta$ and $\theta+\alpha$.

	For all $\alpha\in \JC[2]\setminus\{0\}$, let $V_{C,\alpha}\subseteq V_C$ be defined as in the following diagram:
	\begin{center}
		\begin{tikzpicture}
		\node[anchor=base west] (VC) at (0,0) {$V_C$};
		\node[anchor=base west,xshift=-1mm] (eqVC) at (VC.base east) {$=$};
		\node[anchor=base west,xshift=-1mm] (HK2) at (eqVC.base east) {$H^0\!\left(\mathcal O_{|\kappa_C|^\ast}(2)\right)$};
		\node[anchor=north,yshift=-1cm] (H2K) at (HK2.south) {$H^0\!\left(2\kappa_C\right)$};
		\draw[->>] (HK2) -- node[right] {$p$} (H2K);
		\node[anchor=base west,xshift=2cm] (HKalpha2) at (H2K.base east) {$H^0\!\left(\mathcal O_{|\kappa_C+\alpha|^\ast}(2)\right)$};
		\draw[->] (HKalpha2) -- node[above] {$i$} (H2K);
		\node[anchor=south east,yshift=1cm] (piHKalpha2) at (HKalpha2.north east) {$p^{-1}i H^0\!\left(\mathcal O_{|\kappa_C+\alpha|^\ast}(2)\right)$};
		\draw[draw opacity=0] (piHKalpha2) -- node {$\supseteq$} (HK2);
		\node[anchor=base west,xshift=-1mm] (eqVCalpha) at (piHKalpha2.base east) {$=:$};
		\node[anchor=base west,xshift=-1mm] (VCalpha) at (eqVCalpha.base east) {$V_{C,\alpha}$};
		\end{tikzpicture}
	\end{center}
	where, if we view the elements of $H^0(2\kappa_C)$ as effective representatives of $2\kappa_C$, the projection $p\colon V_C \twoheadrightarrow H^0(2\kappa_C)$ maps a quadric $Q$ to
	\[
	p(Q) =
	\begin{cases}
	\mathcal{Z}(Q) \cap C &\text{if } \dim(\mathcal{Z}(Q) \cap C) = 0, \\
	0 &\text{otherwise}.
	\end{cases}
	\]
	We have that $H^0(\kappa_C + \alpha)$ is canonically isomorphic to $H^0(\mathcal O_{|\kappa_C+\alpha|^\ast}(1))$ by definition. Thus, we define $i$ to be the composition
	\[
	i\colon H^0(\mathcal O_{|\kappa_C+\alpha|^\ast}(2)) \cong \Sym^2 H^0(\kappa_C + \alpha) \longrightarrow H^0(2(\kappa_C + \alpha)) \cong H^0(2\kappa_C)
	\]
	of canonically determined morphisms.
\end{definition}

The results below are helpful for understanding Lehavi's theorem and its simplified proof.

\begin{lemma} \label{lem:conics-in-Steiner-subspace}
	For all $\alpha\in \JC[2]\setminus\{0\}$ and all $\{\theta,\theta+\alpha\}\in\Sigma_{C,\alpha}$ we have $q_{\theta, \theta+\alpha} \in V_{C,\alpha}$. Additionally, $q_C$ is trivially contained in $V_{C,\alpha}$.
\end{lemma}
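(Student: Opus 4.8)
The plan is to verify the two claimed memberships separately, reducing each to the definition $V_{C,\alpha} = p^{-1}\, i\, H^0(\mathcal O_{|\kappa_C+\alpha|^\ast}(2))$. Since $V_{C,\alpha}$ is defined as a preimage under $p$ of the image of $i$, to show $q_{\theta,\theta+\alpha} \in V_{C,\alpha}$ it suffices to show that $p(q_{\theta,\theta+\alpha})$ lies in the image of $i$; similarly for $q_C$. Thus the whole argument amounts to computing the two divisor classes $p(q_{\theta,\theta+\alpha})$ and $p(q_C)$ and exhibiting them as elements coming from $H^0(2(\kappa_C+\alpha))$ under the canonical identifications.

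First I would treat $q_{\theta,\theta+\alpha} = l_\theta \cdot l_{\theta+\alpha}$. By the definition of the tritangents, the linear form $l_\theta$ cuts out $2D_\theta$ on $C$, so $\mathcal Z(q_{\theta,\theta+\alpha}) \cap C = 2D_\theta + 2D_{\theta+\alpha}$ as a divisor, and hence $p(q_{\theta,\theta+\alpha}) = 2D_\theta + 2D_{\theta+\alpha}$, an effective representative of $2\kappa_C$. The key point is that this divisor is $2$-divisible in a way compatible with the twist by $\alpha$: writing it as $2(D_\theta + D_{\theta+\alpha})$, the class of $D_\theta + D_{\theta+\alpha}$ equals $\theta + (\theta+\alpha) = \kappa_C + \alpha$, using $2\theta = \kappa_C$. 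Therefore $D_\theta + D_{\theta+\alpha}$ is an effective representative of $\kappa_C + \alpha$, i.e.\ an element of $H^0(\kappa_C+\alpha)$, and its square under $\Sym^2 H^0(\kappa_C+\alpha) \to H^0(2(\kappa_C+\alpha))$ maps under the canonical isomorphism $H^0(2(\kappa_C+\alpha)) \cong H^0(2\kappa_C)$ precisely to $2D_\theta + 2D_{\theta+\alpha} = p(q_{\theta,\theta+\alpha})$. This exhibits $p(q_{\theta,\theta+\alpha})$ in the image of $i$, so $q_{\theta,\theta+\alpha} \in V_{C,\alpha}$.

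For the second assertion, $q_C$ is the unique quadric containing $C$, so $\mathcal Z(q_C) \supseteq C$ and $\dim(\mathcal Z(q_C) \cap C) = 1 \neq 0$. By the case distinction in the definition of $p$, this forces $p(q_C) = 0$. Since $0$ lies in every linear subspace, in particular in $i\,H^0(\mathcal O_{|\kappa_C+\alpha|^\ast}(2))$, we get $q_C \in p^{-1}\, i\, H^0(\mathcal O_{|\kappa_C+\alpha|^\ast}(2)) = V_{C,\alpha}$ for every $\alpha$; this is the ``trivial'' containment. I expect the only genuine obstacle to be the careful bookkeeping in the first part — namely checking that the canonical isomorphisms defining $i$ really do send the square of the section cutting out $D_\theta + D_{\theta+\alpha}$ to the divisor $2D_\theta + 2D_{\theta+\alpha}$ rather than to some other representative of $2\kappa_C$. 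This is a compatibility of the canonical identifications $H^0(\mathcal O_{|\kappa_C+\alpha|^\ast}(2)) \cong \Sym^2 H^0(\kappa_C+\alpha)$ and $H^0(2(\kappa_C+\alpha)) \cong H^0(2\kappa_C)$, and once it is unwound the claim is immediate.
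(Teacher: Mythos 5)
Your proof is correct and takes essentially the same route as the paper's: both arguments exhibit $D_\theta + D_{\theta+\alpha}$ as an effective representative of $\kappa_C+\alpha$ (using $2\theta=\kappa_C$ and the definition of $\Sigma_{C,\alpha}$), so that $p(q_{\theta,\theta+\alpha}) = 2(D_\theta+D_{\theta+\alpha})$ is a sum of two effective representatives of $\kappa_C+\alpha$ and hence lies in the image of $i$. The only difference is one of explicitness: where the paper dismisses the containment $q_C \in V_{C,\alpha}$ as trivial, you correctly spell out that $p(q_C)=0$ by the case distinction in the definition of $p$, which indeed places $q_C$ in $p^{-1}\,i\,H^0\!\left(\mathcal O_{|\kappa_C+\alpha|^\ast}(2)\right)$.
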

\begin{proof}
	We may write the $2$-torsion class $\alpha$ as $\frac{1}{2}(\mathcal Z(\ell_\theta) \cap C - \mathcal Z(\ell_{\theta+\alpha}) \cap C)$. It is then clear that both $\ell_{\theta}$ and $\ell_{\theta+\alpha}$ cut out effective representatives of $\kappa_C + \alpha$. Since the sum of two effective representatives of $\kappa_C + \alpha$ is certainly contained in the image of $i$, we are done with the first claim. The second claim is trivial.
\end{proof}

One of the pillars in Lehavi's arguments is following semicontinuity statement \cite[Corollary 7]{Lehavi15}. It can be derived from the classical semicontinuity theorem~\cite[Section III.12]{Hartshorne77} and holds for any characteristic of~$k$.
\begin{lemma}\label{lem:lowerSemicontinuity}
	Let $\mathcal{V}/X$ be a vector bundle over a base $X$ and let $\mathcal{V}_1, \ldots, \mathcal{V}_n$ be sub-bundles of $\mathcal{V}$. Then
	\begin{itemize}
		\item[(i)] the function $\dim \langle \mathcal{V}_1 |_x , \ldots, \mathcal{V}_n |_x \rangle$ is lower semi-continuous on $X$,
		\item[(ii)] the function $\dim(\cap_{i=1}^n \mathcal{V}_i |_x )$ is upper semi-continuous on $X$.
	\end{itemize}
\end{lemma}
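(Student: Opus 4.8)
The plan is to reduce both statements to the single elementary fact that the rank of a morphism of vector bundles is lower semi-continuous, which holds over any base and in any characteristic. Concretely, if $\Phi\colon\mathcal E\to\mathcal F$ is a morphism of vector bundles on $X$, then locally $\Phi$ is represented by a matrix of regular functions, and for each $r$ the locus $\{x:\operatorname{rank}\Phi_x\le r\}$ is cut out by the simultaneous vanishing of all $(r+1)\times(r+1)$ minors of this matrix. This locus is therefore closed, so $x\mapsto\operatorname{rank}\Phi_x$ is lower semi-continuous; since the minors are honest polynomial expressions in the matrix entries, no assumption on the characteristic of $k$ is needed. This elementary observation will play the role of the classical semicontinuity theorem cited above.

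For part (i), I would package the sub-bundles into the single morphism $\Phi\colon\bigoplus_{i=1}^n\mathcal V_i\to\mathcal V$ assembled from the inclusions $\mathcal V_i\hookrightarrow\mathcal V$. Because each $\mathcal V_i$ is a sub-bundle, restricting to a fiber keeps the inclusion injective, so $\mathcal V_i|_x$ is genuinely a subspace of $\mathcal V|_x$ and the fiber $\Phi_x\colon\bigoplus_i\mathcal V_i|_x\to\mathcal V|_x$ has image precisely the span $\langle\mathcal V_1|_x,\dots,\mathcal V_n|_x\rangle$. Hence $\dim\langle\mathcal V_1|_x,\dots,\mathcal V_n|_x\rangle=\operatorname{rank}\Phi_x$, and lower semi-continuity of the rank gives (i).

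For part (ii), I would dualize the idea. Since the $\mathcal V_i$ are sub-bundles, the quotients $\mathcal V/\mathcal V_i$ are again vector bundles, and I consider the morphism $\Psi\colon\mathcal V\to\bigoplus_{i=1}^n\mathcal V/\mathcal V_i$ built from the quotient maps. Taking fibers, $\Psi_x\colon\mathcal V|_x\to\bigoplus_i\mathcal V|_x/\mathcal V_i|_x$ has kernel exactly $\bigcap_{i=1}^n\mathcal V_i|_x$, so
\[
\dim\Bigl(\bigcap_{i=1}^n\mathcal V_i|_x\Bigr)=\operatorname{rank}\mathcal V-\operatorname{rank}\Psi_x .
\]
As $\operatorname{rank}\mathcal V$ is constant and $\operatorname{rank}\Psi_x$ is lower semi-continuous, the left-hand side is upper semi-continuous, giving (ii). Alternatively, one can deduce (ii) from (i) by applying (i) to the sub-bundles $(\mathcal V/\mathcal V_i)^\ast\subseteq\mathcal V^\ast$ together with the linear-algebra identity $\bigcap_i\mathcal V_i|_x=\bigl(\sum_i(\mathcal V_i|_x)^\perp\bigr)^\perp$.

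The only point requiring care, and the main (mild) obstacle, is the compatibility of fiber-taking with the direct sum, inclusion, and quotient operations: one must use that each $\mathcal V_i$ is a sub-bundle, i.e.\ a locally split injection, so that restricting to a fiber keeps $\mathcal V_i|_x\hookrightarrow\mathcal V|_x$ injective and identifies the fiber of $\mathcal V/\mathcal V_i$ with $\mathcal V|_x/\mathcal V_i|_x$. With these identifications in hand, both reductions are purely linear-algebraic and the asserted semi-continuity is immediate.
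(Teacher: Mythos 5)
Your proof is correct, and the one delicate point --- that taking fibers commutes with the inclusions, direct sums, and quotients involved --- is exactly the point you identify and resolve by using that sub-bundles are locally split, so that $\mathcal V_i|_x\hookrightarrow\mathcal V|_x$ stays injective and $(\mathcal V/\mathcal V_i)|_x\cong\mathcal V|_x/\mathcal V_i|_x$. Your route, however, is genuinely different from the paper's: the paper gives no argument of its own, instead citing \cite[Corollary 7]{Lehavi15} and remarking that the lemma can be derived from the classical semicontinuity theorem of \cite[Section III.12]{Hartshorne77}, i.e.\ upper semi-continuity of $y\mapsto\dim H^i(X_y,\mathcal F_y)$ for a coherent sheaf flat over the base of a proper family. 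Deriving the lemma along those lines means encoding the spans and intersections as fiber cohomology (or at least as fiber dimensions of the cokernel of your $\Phi$ and the kernel of your $\Psi$), which is a heavier tool than the statement requires. Your reduction to the closedness of determinantal loci buys three things: it is entirely self-contained and linear-algebraic; it needs no hypotheses on the base $X$ (no flatness or properness, which the cohomological theorem assumes in its standard form); and it makes the paper's emphasized remark that the lemma ``holds for any characteristic of~$k$'' completely transparent, since the vanishing of $(r+1)\times(r+1)$ minors is a polynomial condition insensitive to the characteristic. What the paper's citation buys in exchange is only brevity and a direct pointer to Lehavi's framework, where the corollary appears in the exact form invoked later in the reconstruction arguments; your argument could replace that citation with no loss and a gain in self-containment.
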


We will now state a generalization of Lehavi's theorem for reconstructing the quadric for complex space sextics on smooth quadrics \cite[Theorem 1]{Lehavi15}. The idea of the proof is remains the same.

\begin{theorem} \label{thm:Lehavi1}
	Let $k$ be either of characteristic zero or of sufficiently high characteristic.
	Let $C$ be a generic space sextic or a generic space sextic lying on a singular quadric over $k$. Then we have
	\begin{enumerate}[label=(\roman*)]
		\setlength\itemsep{0.2cm}
		\item
		$\PP V_{C,\alpha} = \Span(\{q_{\theta,\theta+\alpha} : \theta\in\Sigma_{C,\alpha}\}) \quad \text{for all } \alpha\in \JC[2]\setminus\{0\}$,
		\item
		$\bigcap_{\alpha\in \JC[2]\setminus\{0\}} \PP V_{C,\alpha} = \Span\{ q_C \}$,
	\end{enumerate}
	where $q_C$ denotes the unique quadric vanishing on $C$, up to scaling.
\end{theorem}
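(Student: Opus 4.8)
The plan is to reduce the theorem to a dimension count combined with the semicontinuity machinery of Lemma~\ref{lem:lowerSemicontinuity}, exactly as in Lehavi's original argument. I would organize the proof around a single generic fibered situation: consider the family of all space sextics of the relevant type (on a smooth quadric, respectively on a singular quadric) as the base $X$, and over it the vector bundle $\mathcal V$ with fiber $V_C$ together with the sub-bundles whose fibers are the $V_{C,\alpha}$. The two semicontinuity statements then say that the dimensions $\dim V_{C,\alpha}$ and $\dim\bigcap_\alpha V_{C,\alpha}$ can only jump on proper closed subsets, so it suffices to verify both equalities at a \emph{single} well-chosen example curve and conclude they hold generically.

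For part (i), I would first establish the inclusion $\Span(\{q_{\theta,\theta+\alpha}\}) \subseteq \PP V_{C,\alpha}$, which is immediate from Lemma~\ref{lem:conics-in-Steiner-subspace}. The reverse inclusion is then a matter of dimensions: I would compute $\dim V_{C,\alpha}$ directly from its definition as $p^{-1} i\, H^0(\mathcal O_{|\kappa_C+\alpha|^\ast}(2))$, using that $\dim H^0(\kappa_C+\alpha)=3$ for generic $\alpha$ so that $\Sym^2 H^0(\kappa_C+\alpha)$ has dimension $6$, and tracking the kernels and images of $p$ and $i$. On the other side I would show that the $28$ quadrics $q_{\theta,\theta+\alpha}$ arising from a single Steiner complex span a space of the same dimension; here the fact that each $q_{\theta,\theta+\alpha}$ is a product of two tritangent forms, together with the symplectic/quadratic-form description of $\JC[2]$ recalled in Section~\ref{sec: space sextics from dp surface}, gives enough independent relations. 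Because both sides are semicontinuous in the right directions, matching the dimensions at one explicit curve forces equality generically.

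For part (ii), the inclusion $\Span\{q_C\} \subseteq \bigcap_\alpha \PP V_{C,\alpha}$ is again the trivial half of Lemma~\ref{lem:conics-in-Steiner-subspace}. The content is that the intersection is no larger, i.e.\ that imposing membership in $V_{C,\alpha}$ for all $255$ nonzero $\alpha$ cuts the space $V_C$ down to the single line spanned by $q_C$. By part~(ii) of Lemma~\ref{lem:lowerSemicontinuity} the intersection dimension is upper semicontinuous, so it again suffices to exhibit one curve where the intersection is exactly one-dimensional; the semicontinuity then guarantees that the generic intersection dimension is at most one, and the trivial inclusion gives the matching lower bound.

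The main obstacle I anticipate is producing the explicit witness curve at which both dimension counts can be certified, and doing so in a way that is valid in the stated characteristic hypotheses rather than only over $\CC$. This is precisely where the reliability of the del Pezzo construction from Section~\ref{sec: space sextics from dp surface} becomes essential: for space sextics on singular quadrics we have a curve with all $120$ tritangents rational, so the spaces $V_{C,\alpha}$ and their intersection can be computed concretely and their dimensions read off. I would therefore take such a del Pezzo example as the distinguished point of $X$, verify the two equalities there by direct (possibly machine-assisted) linear algebra, and invoke Lemma~\ref{lem:lowerSemicontinuity} to spread the result to the generic member of each family. The phrase \emph{sufficiently high characteristic} enters exactly to ensure that the semicontinuity argument does not degenerate and that the witness example retains the expected dimensions after reduction.
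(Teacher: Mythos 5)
Your proposal takes essentially the same route as the paper's proof: the trivial inclusions from Lemma~\ref{lem:conics-in-Steiner-subspace}, reduction to a single witness curve via the semicontinuity of Lemma~\ref{lem:lowerSemicontinuity} over the family of space sextics, a machine-verified del Pezzo example (Example~\ref{ex:delPezzoSteinerSubspace}, covering both the general case and the singular-quadric case) as that witness, and spreading out from $\overline{\QQ}$ to justify the sufficiently-high-characteristic clause. Your sketched hand computation of $\dim V_{C,\alpha}$ via $\Sym^2 H^0(\kappa_C+\alpha)$ and the symplectic structure on $\JC[2]$ is exactly the ``involved geometric argument'' of Lehavi that the paper deliberately replaces by the explicit computation, and it is superfluous once the witness verification is in place.
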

\begin{proof}
	We follow the strategy Lehavi uses for proving \cite[Theorem 1]{Lehavi15}. From Lemma~\ref{lem:conics-in-Steiner-subspace} we immediately have the containments
	$$\Span \{q_{\theta, \theta+\alpha} \} \subseteq V_{C,\alpha} \text{\quad and \quad} \Span\{q_C\} \subseteq \textstyle\bigcap_{\alpha \in \JC[2] \backslash \{0\} } V_{C,\alpha}.$$
	For claim (i), each $q_{\theta, \theta+\alpha}$ generates a rank $1$ sub-bundle of $V_{C,\alpha}$ over the family of smooth genus $4$ curves $C$ in $\PP^3$. The dimension of their span is bounded by the dimension of $V_{C,\alpha}$. If we can demonstrate that $\dim \Span{ \{q_{\theta, \theta+\alpha}\}} = \dim V_{C,\alpha}$ for a particular curve $C$, then via Lemma~\ref{lem:lowerSemicontinuity}(i) the claim must be true for all curves in an open neighbourhood of $C$.
	In the proof of \cite[Theorem 1]{Lehavi15} this is done by hand with a pleasant but involved geometric argument, whereas we can simply use our implementation of Algorithm \ref{algo: Lehavi1} as described in Example~\ref{ex:delPezzoSteinerSubspace}. In both cases the curve $C$ is over $k=\overline\QQ$, which shows the statement for both characteristic zero and sufficiently high characteristic (see the Spreading Out Theorem \cite[Theorem 3.2.1]{Poonen2017}). Additionally, we consider a space sextic on a singular quadric in Example~\ref{ex:delPezzoSteinerSubspace}, which shows the statement for generic curves of that type.

	The proof of the part (ii) is analogous, using Lemma~\ref{lem:lowerSemicontinuity}(ii) instead.
\end{proof}

\begin{algorithm} \label{algo: Lehavi1} \
	\begin{algorithmic}[1]
		\REQUIRE{$\mathcal{S}_C$, the Steiner system of $C$ as in Definition~\ref{def:SteinerSystem}.}
		\ENSURE{$q_C$, the unique quadric form vanishing on $C$.}
		\STATE Construct the subspaces
		\[ \mathbb P V_{C,\alpha} := \Span\Big(\big\{ l_\theta\cdot l_{\theta+\alpha} : \{ l_\theta,l_{\theta+\alpha}\} \in \mathcal{S}_{C,\alpha} \big\}\Big)  \text{ for all } \mathcal{S}_{C,\alpha} \in \mathcal{S}_C \vspace{-5mm}\]
		\STATE Compute their one-dimensional intersection
		\[ \Span \{ q_C \} := \bigcap_{\mathcal{S}_{C,\alpha} \in \mathcal{S}_C} \mathbb P V_{C,\alpha}. \vspace{-5mm} \]
		\RETURN{$q_C$.}
	\end{algorithmic}
\end{algorithm}

\pagebreak
\begin{example}\label{ex:delPezzoSteinerSubspace}
	Consider again the curve $C\subseteq \PP\FF_{97}^3$ from Example~\ref{ex:delPezzoCurve} and the four syzygetic tritangents $l_{12},l_{34},l_{125},l_{345}$ from Example~\ref{ex:delPezzoTritangents}. Then there exists exactly one $\mathcal{S}_{C,\alpha}\in \mathcal{S}_C$ such that $\{l_{12},l_{34}\}\in \mathcal{S}_{C,\alpha}$, for which we have $\mathbb P V_{C,\alpha}\subseteq \PP V_C$ given by the linear span
	\[ \mathbb P V_{C,\alpha}\!=\!
	\text{Lin}\!\left\{
	\begin{array}{ll}
	x_0^2 + 36x_1x_3, & x_1x_2 + 84x_1x_3+ 32x_2x_3 +68x_3^2, \\
	x_0x_1 + 85x_1x_3+ 17x_2x_3 +91x_3^2, & x_2^2 + 10x_1x_3+ 94x_2x_3 +27x_3^2, \\
	x_1^2 + 69x_1x_3+ 22x_2x_3 +45x_3^2, & x_0x_3 + 39x_1x_3+ 34x_2x_3 +38x_3^2\\
	x_0x_2 + 69x_1x_3+ 22x_2x_3 +45x_3^2,
	\end{array}
	\right\}.\]
	One can easily see that $x_1^2-x_0x_2 \in \mathbb P V_{C,\alpha}$; it is the difference of two of our generators above. Our implementation verifies that Theorem~\ref{thm:Lehavi1} (i) and (ii) hold for this particular curve.
	More generally, we can do the same for the curve $C\subseteq\mathbb P\overline \QQ^3$ obtained from the configuration $\mathcal P\subseteq\PP\QQ^3$ in Example~\ref{ex:delPezzoCurve}.
\end{example}

\subsection{Reconstructing a cubic}

Reconstructing a cubic from $\mathcal{S}_C$ is a slightly more involved task. Lehavi's idea is to identify $4$ nodes of a Cayley cubic containing $C$. \cite[Theorem 2]{Lehavi15} allows us to identify the four nodes via an intersection theoretic argument. We state a corrected version here.

\begin{theorem}[{\cite[Theorem 2]{Lehavi15}}] \label{thm:Lehavi2}
	Let $\alpha \in \JC[2] \backslash \{0\}$. The four planes through each of the four triples of nodes of the Cayley cubic associated to $\alpha$ are, set theoretically, the four intersection points in $\PP V_C$ of the six dimensional projective variety
	\[
	\PP \mathcal{Z}\!\Big( \left( (V_C/V_{C,\alpha})^* \wedge (V_C/V_{C,\alpha})^* \right) \cdot S^2 |\kappa_C| \Big)
	\]
	and the $2$nd Veronese image of $|\kappa_C|$ in $\PP V_C$. The \emph{``\ $\cdot$ \ ''} in the equation above denotes tensor contraction. Moreover, each of these four points have multiplicity $4$ in the intersection.
\end{theorem}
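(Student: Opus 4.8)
The plan is to follow Lehavi's intersection-theoretic strategy for \cite[Theorem~2]{Lehavi15}, replacing his explicit geometric computation by a reduction to a single verified example, exactly as in the proof of Theorem~\ref{thm:Lehavi1}. I would first record the dimensions. Since $\deg(\kappa_C+\alpha)=6$ and $h^0(\kappa_C+\alpha)=3$ by Riemann--Roch, while a plane sextic lies on no conic, the map $i$ is injective and its image is $6$-dimensional; as $p$ has the one-dimensional kernel $\Span\{q_C\}$, this gives $\dim V_{C,\alpha}=7$ and hence $\dim(V_C/V_{C,\alpha})=3$. Consequently $(V_C/V_{C,\alpha})^*\wedge(V_C/V_{C,\alpha})^*$ is $3$-dimensional, the variety $\PP\mathcal Z(\ldots)$ has codimension $3$ and hence dimension $6$ in $\PP V_C=\PP^9$, and the $2$nd Veronese image of $|\kappa_C|\cong\PP^3$ is $3$-dimensional, so the dimension count $6+3-9=0$ predicts a finite intersection.

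The classical input is that, for a generic $C$ and likewise for a generic $C$ on a singular quadric, the class $\alpha$ singles out a Cayley cubic whose four nodes are in general position; its $\binom 43=4$ triples of nodes span four planes $\mathcal Z(\ell_1),\dots,\mathcal Z(\ell_4)$, and the corresponding Veronese points $[\ell_1^2],\dots,[\ell_4^2]\in\PP V_C$ are the four candidates (see \cite[Section~5.4]{Dol2012} and \cite{Lehavi15}). The first task is the inclusion. Here I would dualize: $(V_C/V_{C,\alpha})^*$ is the net of quadrics $V_{C,\alpha}^{\perp}$ on $|\kappa_C|$, and I would unwind the contraction $\bigl((V_C/V_{C,\alpha})^*\wedge(V_C/V_{C,\alpha})^*\bigr)\cdot S^2|\kappa_C|$ into an explicit determinantal (rank) condition on this net at a point of $|\kappa_C|$. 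The point is then to check that being a plane through three of the four nodes is exactly this degeneracy; in Cayley's normal form $\sum_i\prod_{j\ne i}x_j=0$, with nodes at the coordinate points and the four triple-planes equal to $\mathcal Z(x_i)$, this should reduce to a short symmetric calculation yielding $[\ell_i^2]\in\PP\mathcal Z(\ldots)$.

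For the converse --- that the intersection is \emph{exactly} these four points and that each occurs with multiplicity $4$ --- I would argue as in Theorem~\ref{thm:Lehavi1}. Both ``the intersection is zero-dimensional and supported on four points'' and ``each local multiplicity equals $4$'' are constructible conditions, so by Lemma~\ref{lem:lowerSemicontinuity} and the Spreading Out Theorem \cite[Theorem~3.2.1]{Poonen2017} it suffices to exhibit one curve of each type over $\overline{\QQ}$ for which they hold. I would take a generic space sextic on a smooth quadric together with the del Pezzo curve of Example~\ref{ex:delPezzoCurve} for the singular case, and verify with the \textsc{magma} implementation underlying Algorithm~\ref{algo: Lehavi2} that the scheme-theoretic intersection is in each case supported on the four triple-planes, each of length $4$. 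The multiplicity $4$ is precisely the content of the correction to \cite[Theorem~2]{Lehavi15}: it records the non-reduced structure that the original statement omits, and reflects that each node is an ordinary double point, which a local computation in Cayley's normal form confirms.

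The main obstacle is this last step. The naive B\'ezout count pulled back to the Veronese $\cong\PP^3$ overshoots the total length $16$, so I must show that the apparent excess is absorbed either by a spurious component of $\PP\mathcal Z(\ldots)$ that is removed or by the non-reduced structure along the Veronese --- precisely the subtlety that the corrected statement makes explicit. Moreover, pinning the local multiplicity to the value $4$ requires a genuine tangency computation rather than a transversality argument. I expect the cleanest route is a symbolic local analysis at a single node in Cayley's normal form, combined with the symmetry permuting the four nodes to force the four multiplicities to coincide, after which the explicit example certifies both the common value and the absence of further intersection points for the generic member of the family.
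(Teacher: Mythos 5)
Your proposal sets out to prove something this paper never actually proves. Theorem~\ref{thm:Lehavi2} is quoted as Lehavi's result, with the correction consisting of the words ``set theoretically'' and the multiplicity-$4$ clause; what the paper supplies after the statement is only context, namely Definition-Lemma~\ref{def:lem14setup} (Lehavi's Lemma~14 setup, itself unproved here) and the unwinding of the tensor contraction into $2\times 2$ minors. That unwinding yields exactly the easy inclusion: for $a$ a node-plane the operators $v\cdot s(a,\,\cdot\,)$ share a common two-dimensional null space, hence satisfy Equation~\eqref{eq:wedge-form}. The example-plus-semicontinuity technique you borrow is indeed the paper's method, but the paper applies it only to Theorem~\ref{thm:Lehavi1} and Theorem~\ref{thm:betterLehavi2}, and it makes the correctness of Algorithm~\ref{algo: Lehavi2} rest on Theorem~\ref{thm:betterLehavi2} precisely so that Theorem~\ref{thm:Lehavi2} is never needed as a proved ingredient.

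The transfer of that technique to this statement is where your argument genuinely breaks. Constructibility of a condition does not allow you to propagate it from one member of a family to the generic member; you need Zariski-openness. The conditions certified in Theorem~\ref{thm:Lehavi1} are open for a one-sided reason supplied by Lemma~\ref{lem:lowerSemicontinuity}: span dimension is lower semicontinuous and bounded above by the constant $\dim V_{C,\alpha}$, so equality is open (dually for intersections). Your condition ``the intersection consists of exactly four points, each of multiplicity $4$'' has no such one-sided structure: on the locus where the intersection stays finite its total length is conserved, but under generization a length-$4$ point may split into several points of smaller multiplicity, and extra generic intersection points may collide into the node-planes at a special member. So the condition is neither open nor closed, and verifying it at a single (necessarily special) curve over $\FF_{97}$ or $\overline\QQ$ is compatible with the generic intersection being, say, sixteen reduced points --- the single-example strategy fails exactly on the part of the statement that constitutes the correction to Lehavi. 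Your fallback, a local tangency computation at a node in Cayley's normal form, is the right kind of argument but is only announced, not executed; moreover it cannot be carried out in the normal form of the cubic alone, because the variety being intersected is defined by $(V_C/V_{C,\alpha})^*$, i.e., by data of the curve $C$ and the class $\alpha$, and relating that net to the Cayley cubic's nodes is the content of Definition-Lemma~\ref{def:lem14setup}, which your computation would need as an input rather than produce as a consequence. (Your opening dimension count, $\dim V_{C,\alpha}=7$ via Riemann--Roch and injectivity of $i$, is fine, and your inclusion step is sound modulo that same input.)
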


We provide some context for the statement of Theorem \ref{thm:Lehavi2}. To do so, it is helpful to consider the set up of \cite[Lemma 14]{Lehavi15}.

\begin{definition-lemma} \label{def:lem14setup}
	For any $\alpha\in \JC[2]\setminus\{0\}$, consider the following multiplication map~$s$, as well as the restriction of the projection onto the first coordinate on its preimage of $\PP V_{C,\alpha}$:
	\begin{center}
		\begin{tikzpicture}
		\node[anchor=base west] (KCKC) at (0,0) {$|\kappa_C| \times |\kappa_C|$};
		\node[anchor=base west,xshift=15mm] (PVC) at (KCKC.base east) {$\mathbb P V_C$};
		\draw[->] (KCKC) -- node[above] {$s$} (PVC);

		\node[anchor=north west,yshift=-5mm] (PVCa) at (PVC.south west) {$\mathbb P V_{C,\alpha}$};
		\draw[draw opacity=0] (PVCa.120) -- node[sloped] {$\subseteq$} ++(0,0.5);
		\node[anchor=north,yshift=-5mm] (kckc) at (KCKC.south) {$s^{-1} \mathbb P V_{C,\alpha}$};
		\draw[draw opacity=0] (kckc) -- node[sloped] {$\subseteq$} (KCKC);
		\draw[->] (kckc) -- (PVCa);

		\node[anchor=base east] (eq) at (kckc.base west) {$:=$};
		\node[anchor=base east] (Xa) at (eq.base west) {$X_\alpha$};
		\node[anchor=north,yshift=-7.5mm] (KC) at (kckc.south) {$|\kappa_C|$};
		\draw[->>] (kckc) -- (KC);
		\end{tikzpicture}
	\end{center}
	The two-dimensional fibers of the restricted projection lie over four points which represent four planes $l_{1,\alpha},\ldots,l_{4,\alpha}$ intersecting triplewise in four points $z_{1,\alpha},\ldots,z_{4,\alpha}$. We define the \emph{Cayley subspace} associated to $\alpha$ by
	\[
	\mathbb P W_{C,\alpha} := \Span\left\{ \frac{\prod_{j=1}^4 l_{j,\alpha} } {l_{i,\alpha}} : 1 \leq i \leq 4 \right\}
	\oplus \Span\{ h q_C : \deg(h) = 1 \}.
	\]
	Generically, the first summand is the space of cubics with the nodes at $z_{1,\alpha},\ldots,z_{4,\alpha}$ and the first summand contains the Cayley cubic associated to $\alpha$.
\end{definition-lemma}

\newcommand{\longunderscore}{\text{--}}
With the notation of Definition-Lemma \ref{def:lem14setup}, the planes through $3$ nodes of the Cayley cubic are exactly the $a \in |\kappa_C|$ such that the subvariety
\[
\{ x \in |\kappa_C| : v \cdot s(a,x) = 0  \text{ for all } v \in (V_C/V_{C,\alpha})^* \}
\]
has dimension $2$. For any $v \in (V_C/V_{C,\alpha})^*$, we now view $v \cdot s(a,\longunderscore) \in |\kappa_C|^*$ as an operator. As the null-spaces of these operators are identical, they are linearly dependent as vectors in $|\kappa_C|^*$. In particular, we have for any $v,v' \in (V_C/V_{C,\alpha})^*$ that
\begin{equation} \label{eq:wedge-form}
	v \cdot s(a,\longunderscore ) \wedge v' \cdot s(a,\longunderscore) = \vec 0. \tag{$\ast\ast$}
\end{equation}
In terms of a basis $\{e_1, \ldots, e_4 \}$ for $|\kappa_C|$, we have that
\[
(v \cdot s(a,e_1), \ldots, v \cdot s(a,e_4)) \wedge (v' \cdot s(a,e_1), \ldots, v' \cdot s(a,e_4)) = \vec 0.
\]
Or even more explicitly, that the $2 \times 2$ minors of the matrix
\scalebox{0.6}{
	\begin{math}
		\begin{bmatrix}
			v \cdot s(a,e_1) & \ldots & v \cdot s(a,e_4) \\
			v' \cdot s(a,e_1) & \ldots & v' \cdot s(a,e_4)
		\end{bmatrix}
	\end{math}
}
vanish. The variety defined in Theorem \ref{thm:Lehavi2} is
\[
\mathcal{Z}\!\!\left( \mathrm{minors}_{2 \times 2}
\begin{bmatrix}
v \cdot s(a,e_1) & \ldots & v \cdot s(a,e_4) \\
v' \cdot s(b,e_1) & \ldots & v' \cdot s(b,e_4)
\end{bmatrix}
: v,v' \in (V_C/V_{C,\alpha})^*
\right)
\subseteq \PP V_C
\]
where $\PP V_C = S^2|\kappa_C|$ is endowed with tensor coordinates $a \otimes b$. A coordinate-free description is given by
\[
\mathcal{Z} \big( (V_C/V_{C,\alpha})^* \wedge (V_C/V_{C,\alpha})^* \cdot S^2|\kappa_C| \big) \subseteq V_C
\]
which follows from Equation $\eqref{eq:wedge-form}$ since tensor contraction commutes with wedges.

\bigskip

%
\newcommand{\cubicsOfC}{\mathrm{I}(C)_3}
If $C = \mathcal{Z}(q_C, p_C)$ is a smooth space sextic, we have that the space $\cubicsOfC$ of cubics containing $C$ is $5$-dimensional. It is spanned by the $4$-dimensional subspace $\Span\{ h q_C : \deg(h) = 1 \}$ along with some irreducible cubic vanishing on $C$. Observe that the defining equation of any Cayley cubic containing $C$ could serve as the additional cubic. Theorem \ref{thm:Lehavi2} allows us to identify the four nodes of a Cayley cubic containing $C$ but this is not quite enough information to recover the curve uniquely. However, a Cayley cubic containing $C$ is (generically) contained in the space $\PP W_{C,\alpha}$, so by using all $\alpha \in \JC[2] \backslash \{0\}$ we can recover the space of cubics containing $C$.

\pagebreak
\begin{theorem} \label{thm:betterLehavi2}
	Let $C$ be a smooth space sextic, let $\PP \cubicsOfC$ be the $4$-dimensional subspace of cubics in $\PP^3$ containing $C$, and let $W_{C,\alpha}$ be as in Definition \ref{def:lem14setup}. Then
	\begin{enumerate}[label=(\roman*),leftmargin=*]
		\setlength\itemsep{0.2cm}
		\item
		If $C$ is generic, we have $\,\dim W_{C,\alpha} = 8 \,$ for every $\alpha \in \JC[2] \backslash \{0\}$.
		\item
		If $C$ is a generic member of the family of space sextics contained in a singular quadric, then $\, \dim W_{C,\alpha} = 8 \,$ for the $\alpha \in \JC[2] \backslash \{0\}$ not of the form $\theta - \theta_0$, where $\theta$ is one of the $120$ odd theta characteristics and $\theta_0$ is the vanishing even theta characteristic.
		\item
		Let $C$ be as in case (i) or (ii) and let $A := \{\alpha \in \JC[2] \backslash \{0\} : \dim W_{C,\alpha} = 8 \}$. Then
		\[
		\PP \cubicsOfC = \bigcap_{\alpha\in A} \PP W_{C,\alpha}.
		\]
	\end{enumerate}
\end{theorem}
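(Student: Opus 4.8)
The plan is to prove all three parts by the mechanism already used for Theorem~\ref{thm:Lehavi1}: pair an a priori upper bound on the relevant dimension with the semicontinuity of Lemma~\ref{lem:lowerSemicontinuity}, and then pin down the generic value by a single explicit curve computed with our implementation. By Definition-Lemma~\ref{def:lem14setup}, $W_{C,\alpha}$ is the sum of the span of the four cubics $\prod_{j\neq i} l_{j,\alpha}$ for $1\le i\le 4$ and the $4$-dimensional space $\Span\{hq_C:\deg h=1\}$. Each summand has dimension at most $4$, so $\dim W_{C,\alpha}\le 8$ holds unconditionally, and equality is equivalent to both summands being $4$-dimensional and in direct sum. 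Since $q_C$ is irreducible, no individual product $\prod_{j\neq i}l_{j,\alpha}$ lies in $\Span\{hq_C\}$, so directness is an open condition cut out by the non-vanishing of suitable minors, and it suffices to verify it at one curve.

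For part (i) I would regard the two summands as sub-bundles $\mathcal V_1,\mathcal V_2$ of the trivial bundle of cubic forms on $\PP^3$ over the family of generic smooth space sextics, the four planes $l_{j,\alpha}$ varying algebraically with $C$; Lemma~\ref{lem:lowerSemicontinuity}(i) then makes $C\mapsto\dim\langle\mathcal V_1|_C,\mathcal V_2|_C\rangle=\dim W_{C,\alpha}$ lower semicontinuous. Exhibiting a single curve $C_0$ over $\overline\QQ$, on a smooth quadric, with $\dim W_{C_0,\alpha}=8$ for every $\alpha\in\JC[2]\setminus\{0\}$ forces $\dim W_{C,\alpha}=8$ on an open neighbourhood, and the Spreading Out Theorem~\cite[Theorem 3.2.1]{Poonen2017} carries this from characteristic zero to sufficiently high characteristic. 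Part (ii) is identical, with the base restricted to space sextics on a singular quadric and anchored by the del Pezzo curve of Example~\ref{ex:delPezzoCurve}; the only new feature is the excluded set, since for $\alpha=\theta-\theta_0$ the vanishing section attached to the even theta characteristic $\theta_0$ degenerates the four-plane configuration of Definition-Lemma~\ref{def:lem14setup}, whereas for every other $\alpha$ that configuration stays generic and the argument of part (i) applies verbatim.

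For part (iii) I would first prove $\cubicsOfC\subseteq\bigcap_{\alpha\in A}W_{C,\alpha}$. The space $\Span\{hq_C\}$ sits in every $W_{C,\alpha}$ by definition, and for each $\alpha\in A$ the associated Cayley cubic $K_\alpha$ both lies in $W_{C,\alpha}$ and vanishes on $C$; writing $K_\alpha=c\,p_C+hq_C$ with $c\neq 0$ by irreducibility shows $p_C\in W_{C,\alpha}$, so the whole $5$-dimensional space $\cubicsOfC=\Span\{hq_C\}\oplus\Span\{p_C\}$ lies in each $W_{C,\alpha}$, giving $\dim\bigcap_{\alpha\in A}W_{C,\alpha}\ge 5$. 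For the reverse bound, parts (i) and (ii) make the $W_{C,\alpha}$ rank-$8$ sub-bundles over the generic locus (where the combinatorial index set coincides with $A$), so Lemma~\ref{lem:lowerSemicontinuity}(ii) renders $C\mapsto\dim\bigcap_\alpha W_{C,\alpha}$ upper semicontinuous; a single example with a $5$-dimensional intersection then forces $\dim\bigcap_{\alpha\in A}W_{C,\alpha}\le 5$ nearby. The two bounds give equality of dimensions, and since $\cubicsOfC$ is contained in the intersection the two subspaces coincide.

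I expect the genuine difficulty to lie in the computations underlying the semicontinuity steps---producing curves that realize $\dim W_{C,\alpha}=8$ and $\dim\bigcap_\alpha W_{C,\alpha}=5$---and, for part (ii), in correctly isolating the degenerate values $\alpha=\theta-\theta_0$ and confirming that for every $\alpha\in A$ the Cayley cubic of Definition-Lemma~\ref{def:lem14setup} is actually present in $W_{C,\alpha}$, since this presence is exactly what drives the inclusion in part (iii).
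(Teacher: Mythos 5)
Your proposal is correct and takes essentially the same route as the paper: the published proof of Theorem~\ref{thm:betterLehavi2} consists of exactly this mechanism, namely applying the implementation of Algorithm~\ref{algo: Lehavi2} to a specific example and invoking the semicontinuity of Lemma~\ref{lem:lowerSemicontinuity} to conclude that the statements hold generically, with the Spreading Out Theorem handling sufficiently high positive characteristic. The extra details you supply (the a priori bound $\dim W_{C,\alpha}\le 8$ and the Cayley-cubic argument giving $\mathrm{I}(C)_3\subseteq\bigcap_{\alpha\in A}W_{C,\alpha}$) only make explicit what the paper's very terse proof and the discussion preceding the theorem leave implicit.
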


The proof of Theorem \ref{thm:betterLehavi2} is similar to our proof of Theorem \ref{thm:Lehavi1}. We apply our implementation of Algorithm \ref{algo: Lehavi2} to a specific example and, via semi-continuity, we obtain that the formula holds generically.

\begin{algorithm} \label{algo: Lehavi2}\
	\begin{algorithmic}[1]
		\REQUIRE{$(\mathcal{S}_C,q_C)$, where
			\begin{itemize}[leftmargin=*]
				\item $\mathcal{S}_C$, the Steiner system as in Definition~\ref{def:SteinerSystem},
				\item $q_C$, the unique quadric form vanishing on $C$.
			\end{itemize}
		}
		\ENSURE{$p_C$, a cubic form such that $C = \mathcal{Z}(q_C)\cap\mathcal Z(p_C)$}
		\STATE We introduce the following coordinates on the homogeneous coordinate rings:
		\begin{align*}
			A(|\kappa_C|\times|\kappa_C|) &:=k[\mathbf{x}, \mathbf{y}]:= k[x_0,x_1,x_2,x_3,y_0,y_1,y_2,y_3],\\
			A(\PP V_C) &:=k[\mathbf{z}]:= k[z_{ij}: 0\leq i\leq j\leq 3],
		\end{align*}
		so that the multiplication map $s$ induces the homomorphism
		\[ s: A(\PP V_C)\longrightarrow A(|\kappa_C|\times|\kappa_C|), \quad z_{ij}\longmapsto \begin{cases}
		x_i y_j & i=j,\\
		x_i y_j + x_jy_i & i\neq j
		\end{cases}\]
		and the projection map $p$ induces the homomorphism
		\[ p: A(|\kappa_C|\times|\kappa_C|)\longrightarrow A(|\kappa_C|), \quad x_{i}\longmapsto x_i.\vspace{-5mm} \]
		\FOR{$\mathcal{S}_{C,\alpha}\in\mathcal S_C$}
		\STATE Construct the linear subspace $\mathbb P V_{C,\alpha}$ as a subvariety of $\PP V_C$:
		\[
		\mathbb P V_{C,\alpha} := \Span\Big(\big\{ l_\theta\cdot l_{\theta+\alpha} : \{ l_\theta,l_{\theta+\alpha}\} \in \mathcal{S}_{C,\alpha} \big\}\Big) \subseteq \PP V_C\cong \PP^{\{z_{ij}\}}.
		\]
		Compute the linear generators $F_{\mathbb P V_{C,\alpha}}$ of the defining ideal of $\PP V_{C,\alpha}$.
		\STATE Set $F_{X_\alpha}:=s(F_{\mathbb P V_{C,\alpha}})\subseteq k[\mathbf{x},\mathbf{y}]$, so that $X_\alpha=\mathcal{Z}(F_{X_\alpha})$.
		\STATE Let $I_\alpha$ be the ideal generated $F_{X_\alpha}$ and the $2\times 2$ minors of its Jacobian:
		\[ J_y F = \bigg(\frac{\partial f}{\partial y_j}\bigg)_{f\in F_{X_\alpha},\; j=0,\ldots,3}. \vspace{-2mm}\]
		\STATE Compute $\mathcal{Z}(p^{-1} I_\alpha)=: \{l_{1,\alpha},l_{2,\alpha},l_{3,\alpha},l_{4,\alpha}\} \subseteq |\kappa_C|$.
		\STATE Compute the four points $\{z_{1,\alpha},z_{2,\alpha},z_{3,\alpha},z_{4,\alpha}\} \subseteq |\kappa_C|^*$ which lie on exactly $3$ of the planes in $\{l_{1,\alpha},l_{2,\alpha},l_{3,\alpha},l_{4,\alpha}\}$.
		\STATE Construct the set $\mathcal W_{C,\alpha}$ of cubics in $\mathbb P^3$ with nodes $\{z_{1,\alpha},z_{2,\alpha},z_{3,\alpha},z_{4,\alpha}\}$.
		\STATE Compute $W_{C,\alpha} := \mathcal W_{C,\alpha} \oplus \Span \{x_0 q_C, \ldots, x_3 q_C \}$.
		\ENDFOR
		\STATE Set $A := \{\alpha \in \JC[2] \backslash \{0\} : \dim W_{C,\alpha} = 8  \}$.
		\STATE Compute the intersection
		\[ \PP \cubicsOfC := \bigcap_{\alpha \in A} \PP W_{C,\alpha} \]
		and pick a $p_C \in \cubicsOfC$ such that the cubic $\mathcal{Z}(p_C)$ does not contain the quadric $\mathcal{Z}(q_C)$.
		\RETURN{$p_C$.}
	\end{algorithmic}
\end{algorithm}

\begin{example} \label{ex:delPezzoCayleySubspace}
	Consider again the curve $C$ from Example~\ref{ex:delPezzoCurve}. For its $\mathbb PV_{C,\alpha}$ in Example~\ref{ex:delPezzoSteinerSubspace}, we obtain the four points
	\[\mathcal Z(p^{-1} I_\alpha) = \left\{
	\begin{array}{ll}
	(4 \!:\! 76 \!:\! 7 \!:\! 1),
	&(a^{281863} \!:\! a^{394021} \!:\! a^{855207} \!:\! 1),\\
	(a^{736807} \!:\! a^{69925} \!:\! a^{526311} \!:\! 1),
	&(a^{873223} \!:\! a^{800485} \!:\! a^{814599} \!:\! 1)
	\end{array}
	\right\}\subseteq |\kappa_C|,
	\]
	where $a\in\FF_{97^3}$ is a generator of the multiplicative group of the finite field. The four nodes of the Cayley cubic are dual to the four points above, and are given by
	\[
	\left\{
	\begin{array}{ll}
	(a^{691050} \!:\! a^{850020} \!:\! a^{167536} \!:\! 1),
	&(a^{406794} \!:\! a^{311460} \!:\! a^{735568} \!:\! 1),\\
	(a^{214122} \!:\! a^{93444} \!:\! a^{161680} \!:\! 1),
	&(43 \!:\! 25 \!:\! 72 \!:\! 1)
	\end{array}
	\right\} \subseteq |\kappa_C|^* = \PP^3.
	\]
	Under the reverse lexicographic order on the basis of the space of cubics,
	\begin{align*}
		\{ &x_0^3,  x_0^2x_1,  x_0x_1^2,  x_1^3,  x_0^2x_2,  x_0x_1x_2,  x_1^2x_2,  x_0x_2^2,  x_1x_2^2,  x_2^3,  \\
		&x_0^2x_3,  x_0x_1x_3,  x_1^2x_3,  x_0x_2x_3,  x_1x_2x_3,  x_2^2x_3,  x_0x_3^2,  x_1x_3^2,  x_2x_3^2,  x_3^3\},
	\end{align*}
	these four nodes yield
	\[
	\PP W_{C,\alpha} = \text{Lin}
	\left\{
	\begin{footnotesize}
	\arraycolsep=3pt
	\begin{array}{rrrrrrrrrrrrrrrrrrrr}
	(1&0&0&0&0&0&0&6&29&41&38&73&0&23&58&51&68&29&51&74),\\
	(0&1&0&0&0&0&0&3&11&34&57&79&0&38&46&92&41&34&61&88),\\
	(0&0&1&0&0&0&0&47&72&48&42&39&0&72&8&32&93&48&85&12),\\
	(0&0&0&1&0&0&0&3&16&2&40&8&0&85&35&90&28&69&17&15),\\
	(0&0&0&0&1&0&0&47&72&48&42&39&0&72&8&32&93&48&85&12),\\
	(0&0&0&0&0&1&0&3&16&2&40&8&0&85&35&90&28&69&17&15),\\
	(0&0&0&0&0&0&1&96&0&0&0&0&0&0&0&0&0&0&0&0),\\
	(0&0&0&0&0&0&0&0&0&0&0&0&1&96&0&0&0&0&0&0)
	\end{array}
	\end{footnotesize}
	\right\}.
	\]
\end{example}

\begin{remark}
	For our example exhibiting Theorem \ref{thm:betterLehavi2}(ii), it turns out that \linebreak $\dim W_{C,\alpha} = 7$ for every $\alpha \in \JC[2] \backslash \{0\}$ of the form $\theta - \theta_0$, with $\theta$ one of the $120$ odd theta characteristics and $\theta_0$ the vanishing even theta characteristic. We anticipate that this is always the case.
\end{remark}

\begin{conjecture}
	Let $C$ be a smooth space sextic contained in a singular quadric and let $\alpha = \theta - \theta_0 \in \JC[2] \backslash \{0\}$, where $\theta$ is one of the $120$ odd theta characteristics and $\theta_0$ the vanishing even theta characteristic. Then $\, \dim W_{C,\alpha} = 7$.
\end{conjecture}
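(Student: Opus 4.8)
The plan is to upgrade the statement from a generic assertion to an identity that holds over the \emph{entire} irreducible family of space sextics on singular quadrics, and to locate the dimension drop in the intersection of the two summands defining $W_{C,\alpha}$. First I would record what holds for every such curve. The special torsion points are intrinsically characterised: $\alpha=\theta-\theta_0$ with $\theta$ odd is equivalent to $q_{\theta_0}(\alpha)=1$ for the quadratic form $q_{\theta_0}$ on $\JC[2]$ attached to the vanishing even theta characteristic $\theta_0$, and there are exactly $2^7-2^3=120$ such $\alpha$, in bijection with the $120$ odd theta characteristics. Moreover the Cayley cubic associated to $\alpha$ contains $C$ and has nodes $z_{1,\alpha},\ldots,z_{4,\alpha}$, so it lies in $\mathrm{I}(C)_3\cap\mathcal{W}_{C,\alpha}$; together with $\Span\{hq_C:\deg h=1\}\subseteq\mathrm{I}(C)_3$ this already gives $\mathrm{I}(C)_3\subseteq W_{C,\alpha}$ unconditionally.

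Next I would set up the parameter family $\mathcal{F}$ of pairs $(C,\alpha)$ with $C$ a smooth space sextic on a singular quadric and $\alpha$ special, and argue that it is irreducible. The curves are precisely the branch curves of degree $1$ del Pezzo surfaces, hence parametrised up to isomorphism by configurations of eight points in general position in $\PP^2$, an irreducible base; the $120$ special $\alpha$ form a degree $120$ cover which is irreducible because the monodromy group, a subgroup of $W(E_8)$ acting through the $120$ Bertini-conjugate pairs of exceptional curves, is transitive on them. On $\mathcal{F}$ one has $\dim W_{C,\alpha}=\dim\Span\{\prod_{j\neq i}l_{j,\alpha}:1\le i\le 4\}+4-\dim\big(\Span\{\textstyle\prod_{j\neq i}l_{j,\alpha}\}\cap\Span\{hq_C:\deg h=1\}\big)$, so it suffices to prove that on all of $\mathcal{F}$ the first summand is exactly $4$-dimensional and this intersection is exactly $1$-dimensional.

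The core is a geometric degeneration lemma: for $\alpha=\theta-\theta_0$ the four nodes $z_{1,\alpha},\ldots,z_{4,\alpha}$ all lie on the quadric cone $Q_C$ and are coplanar. I would prove this by tracing Definition--Lemma~\ref{def:lem14setup} through the identification $\kappa_C+\alpha=\theta_0+\theta$. The linear system $|\theta_0+\theta|$ carries the distinguished sub-pencil $H^0(\theta_0)\cdot s_\theta$, whose divisors all contain the fixed degree $3$ divisor $D_\theta$, so the map $\phi_{\kappa_C+\alpha}$ collapses $D_\theta$ to a single point of $|\kappa_C+\alpha|^\ast$. This base-point behaviour, which is exactly what the vanishing of $\theta_0$ (equivalently the vertex of the cone) forces, degenerates the four totally tangent planes $l_{i,\alpha}$ and places their triple intersection points on $Q_C$ in a common plane $h_0$. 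Granting the lemma, $h_0q_C$ is singular along the entire conic $Q_C\cap\mathcal{Z}(h_0)$, hence at all four nodes, so $0\neq h_0q_C\in\mathcal{W}_{C,\alpha}\cap\Span\{hq_C:\deg h=1\}$ and the intersection is at least $1$-dimensional.

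To finish I would establish the complementary sharp bounds: that the four products $\prod_{j\neq i}l_{j,\alpha}$ stay linearly independent and that no second independent multiple $h'q_C$ acquires all four nodes, so that both summand dimension $4$ and overlap dimension $1$ hold \emph{identically} on $\mathcal{F}$, yielding $\dim W_{C,\alpha}=4+4-1=7$. The explicit $\FF_{97}$ curve, lifted to $\overline{\QQ}$ and run through Algorithm~\ref{algo: Lehavi2}, serves as the base case confirming the value $7$ and, via lower semicontinuity of $\dim W_{C,\alpha}$ together with irreducibility of $\mathcal{F}$, upgrades the inequality $\dim W_{C,\alpha}\ge 7$ to a dense open locus; the structural bounds then extend the equality to the whole family. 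The main obstacle is unquestionably the degeneration lemma: controlling precisely how $\theta_0$ forces the four Cayley nodes onto the cone and into a common plane, and verifying that the configuration is no more degenerate than this (so the overlap is exactly one-dimensional), is where the real work lies, since any coarser semicontinuity argument delivers only the generic statement already covered by Theorem~\ref{thm:betterLehavi2}.
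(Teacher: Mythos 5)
Your proposal cannot be measured against a proof in the paper, because the paper does not prove this statement: it is posed as an open conjecture, supported only by the single computed example reported in the remark preceding it. Judged on its own, your argument has a fatal gap, and it sits exactly at the step you flag as ``where the real work lies'': the degeneration lemma is not merely unproven, it is false as stated. Four distinct planes $l_{1,\alpha},\ldots,l_{4,\alpha}$ in $\PP^3$ meeting triplewise in four \emph{distinct} points $z_{1,\alpha},\ldots,z_{4,\alpha}$ (the setup of Definition-Lemma \ref{def:lem14setup}) can never have those four points coplanar. Indeed, suppose all four lie on a plane $h_0$. The points $z_3,z_4$ are distinct points of the line $l_1\cap l_2$, so $l_1\cap l_2\subseteq h_0$, and likewise $l_1\cap l_3\subseteq h_0$. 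Both lines lie in $l_1\cap h_0$, so either $l_1=h_0$, or $l_1\cap l_2=l_1\cap l_3$, in which case this common line lies in $l_1\cap l_2\cap l_3$, contradicting that this triple intersection is the single point $z_4$. And if $l_1=h_0$, then $z_1\in h_0\cap l_2\cap l_3=l_1\cap l_2\cap l_3=\{z_4\}$, contradicting distinctness. So the tetrahedron can degenerate only through coincident points or planes (or a positive-dimensional jumping locus), never through ``four coplanar nodes''; and once points or planes coincide, your bookkeeping $\dim W_{C,\alpha}=4+4-1$ collapses, since coincident nodes enlarge the space of cubics singular at them while coincident planes shrink the span of the products.

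Even granting coplanarity, the mechanism proves the wrong number. Cubics singular at four coplanar points in general position in their plane $h_0$ and lying on the cone are exactly $h_0\cdot\{\,\text{quadrics through the four points}\,\}$, a $6$-dimensional space, so the count would be $\dim W_{C,\alpha}=6+4-1=9$, not $7$. (Note also a conflation in your argument: your displayed dimension formula uses $\Span\{\prod_{j\neq i}l_{j,\alpha}\}$, but the element $h_0q_C$ you produce is only shown to be singular at the four nodes; these two spaces agree only when the nodes are in general position, which is precisely what your lemma destroys.) The arithmetic $4+4-1=7$ points instead to the degeneration your lemma should assert: one node at the vertex of $Q_C$ and the other three on $Q_C$, with $h_0$ the plane spanned by those three; then $h_0q_C$ is singular at all four (non-coplanar) nodes and $\Span\{h_0q_C\}$ is a plausible candidate for the full overlap. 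Finally, your endgame cannot repair the gap: $\dim W_{C,\alpha}$ is the dimension of a span of sub-bundles, hence lower semicontinuous by Lemma \ref{lem:lowerSemicontinuity}(i), so your verified example yields only $\dim W_{C,\alpha}\geq 7$ on an open locus, while the required bound $\leq 7$ is a closed condition. This asymmetry is exactly why the paper could prove Theorems \ref{thm:Lehavi1} and \ref{thm:betterLehavi2} by one example plus semicontinuity, but had to leave the present statement --- which moreover is asserted for \emph{every} smooth space sextic on a singular quadric, not just generic ones --- as a conjecture.
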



\section*{Acknowledgments}
The authors would like to thank David Lehavi, Emre Sert\"oz, and Bernd Sturmfels for their valuable comments. The first author was partially supported by the Henri Lebesgue Center for Mathematics. The second and third authors would like to thank the Max-Planck-Institute MIS and the Mittag-Leffler-Institute with its programme ``Tropical Geometry, Amoebas and Polytopes'' respectively for their generous hospitality.

\renewcommand*{\bibfont}{\small}
\printbibliography

\end{document}